\theoremstyle{plain}
\newtheorem{theorem}{Theorem}
\newtheorem{lemma}{Lemma}
\theoremstyle{definition}
\newtheorem{example}{Example}
\newtheorem{definition}{Definition}
\newtheorem{remark}{Remark}
\newcommand{\al}{\alpha}
\newcommand{\de}{\delta}
\newcommand{\la}{\lambda}
\newcommand{\si}{\sigma}
\newcommand{\te}{\theta}
\newcommand{\eps}{\varepsilon}
\newcommand{\R}{\mathbb R}
\newcommand{\Sp}{{\mathbb S}^{n-1}}	
\newcommand{\brR}[1]{\!\left(#1\right)}	
\newcommand{\brS}[1]{\left[#1\right]}
\newcommand{\set}[1]{\left\lbrace#1\right\rbrace}	
\newcommand{\Set}[1]{\Big\lbrace#1\Big\rbrace}		
\newcommand{\abs}[1]{\left|#1\right|}			
\newcommand{\norm}[1]{\left\|#1\right\|}	
\newcommand{\rd}[1]{\rho^{}_{{#1}}}	
\newcommand{\spn}{{\rm span}}	
\newcommand{\hide}[1]{}
\begin{document}

\title{{\bf On the local convexity of intersection bodies of revolution}
\footnotetext{2010 Mathematics Subject Classification: Primary:  52A20, 52A38, 44A12.}
\footnotetext{Key words and phrases: convex bodies, intersection bodies of star bodies, Busemann's theorem, local convexity.}}
\author{M. Angeles Alfonseca\thanks{Supported in part by U.S.~National Science Foundation grant DMS-1100657.}\,\, and\, Jaegil Kim\thanks{Supported in part by U.S.~National Science Foundation grant DMS-0652684.}}

\date{}
\maketitle

\begin{abstract} 
One of the fundamental results in Convex Geometry is Busemann's theorem, which states that the intersection body of a symmetric convex body is convex. Thus, it is only natural to ask if there is a quantitative version of Busemann's theorem, i.e., if the intersection body operation actually improves convexity. In this paper we concentrate on the symmetric bodies of revolution to provide several results on the (strict) improvement of convexity under the intersection body operation. It is shown that the intersection body of a symmetric convex body of revolution has the same asymptotic behavior near the equator as the Euclidean 
ball. We apply this result to show that  in sufficiently high dimension the double intersection body of a symmetric convex body of revolution is very close to an ellipsoid in the Banach-Mazur distance. We also prove results on the local convexity at the equator of intersection bodies in the class of star bodies of revolution.
\end{abstract}

\section{Introduction and Notation}

A set $S\subset\R^n$ is said to be {\em symmetric} if it is symmetric with respect to the origin (i.e., $S=-S$), and {\it star-shaped} if the line segment from the origin to any point in $S$ is contained in $S$. For a star-shaped set $K\subset\R^n$, the {\em radial function} of $K$  is defined by $$\rd{K}(u) = \sup \{\la \ge 0 : \la u\in K\} \quad\mbox{for every } u\in \Sp.$$  A {\em body} in $\R^n$ is a compact set which is equal to the closure of its interior.  A body $K$ is called a {\em star body} if it is star-shaped at the origin and its radial function $\rd{K}$ is continuous. We say that a body $K$ is {\em locally convex} at a point $p$ on the boundary of $K$ if there exists a neighborhood $B(p,\eps)=\set{q\in\R^n:|p-q|\le\eps}$ of $p$ such that $K\cap B(p,\eps)$ is convex. Furthermore, if $p$ is an extreme point of $K\cap B(p,\eps)$, then $K$ is said to be {\em strictly convex} at $p$.

In \cite{Lu1}, Lutwak introduced the notion of the {\em intersection body of a star body}. The intersection body $I\!K$ of a star body $K$ is defined by its radial function $$\rd{I\!K}(u)= |K\cap u^\perp| \quad \mbox{ for every } u\in \Sp.$$ Here and throughout the paper,  $u^\perp$ denotes the central hyperplane perpendicular to $u$.  By $|A|_k$, or simply $|A|$ when there is no ambiguity, we denote the $k$-dimensional Lebesgue measure of a set $A$.
From the volume formula in polar coordinates for the section $K\cap u^\perp$,  the following analytic definition of the intersection body of a star body can be derived: the radial function of the intersection body $I\!K$ of a star body $K$ is given by $$\rd{I\!K}(u) = \frac1{n-1}\int_{\Sp\cap u^\perp}\rd{K}(v)^{n-1}dv = \frac1{n-1}\brR{{\mathcal R}\rho_{K}^{n-1}}(u),\quad u\in \Sp.$$ Here ${\mathcal R}$ stands for the spherical Radon transform. The more general class of intersection bodies is defined in the following way (see \cite{Ga2}, \cite{Ko}). A star body $K$ is an {\em intersection body} if its radial function $\rd{K}$ is the spherical Radon transform of an even non-negative measure $\mu$. We refer the reader to the books \cite{Ga2}, \cite{Ko}, \cite{KoY} for more information on the definition and properties of intersection bodies, and their applications in Convex Geometry and Geometric Tomography.

In order to measure the distance between two symmetric bodies $K$ and $L$, we use the Banach-Mazur distance $$d_{B\!M}(K,L)=\inf \Set{r\ge1: K \subset TL \subset rK \text{ for some }T\in GL(n)}.$$ 
We note that the intersection bodies of linearly equivalent star bodies are linearly equivalent (see Theorem 8.1.6 in \cite{Ga2}), in the sense that $I(TK)=\abs{\det T}(T^*)^{-1}I\!K$ for any $T\in GL(n)$. This gives that $d_{B\!M}(I(TK),I(TL))=d_{B\!M}(I\!K,I\!L)$ for any $T\in GL(n)$.

A classical theorem of Busemann \cite{Bu} (see also \cite[Theorem 8.1.10]{Ga2}) states that the intersection body of a symmetric convex body is convex. In view of Busemann's theorem it is natural to ask how much of convexity is preserved or improved under the intersection body operation. As a way to measure `convexity' of a body, we may consider the Banach-Mazur distance from the Euclidean ball.  Hensley proved in \cite{He} that the Banach-Mazur distance between the intersection body of any symmetric convex body $K$ and the ball $B_2^n$ is bounded by an absolute constant $c>1$, that is, $d_{B\!M}(I\!K,B_2^n)\le c$. Compared with John's classical result, $d_{B\!M}(K,B_2^n)\le\sqrt{n}$ for any symmetric convex body $K$, we see that in many cases the intersection body operation improves convexity in the sense of the Banach-Mazur distance from the ball (see \cite{KYZ} for a similar discussion on quasi-convexity). 

Given that the intersection body of a Euclidean ball is again a Euclidean ball, another question about the intersection body operator $I$ comes from works of  Lutwak \cite{Lu2} and Gardner \cite[Prob.\ 8.6-7]{Ga2} (see also \cite{GrZ}): Are there other fixed points of the intersection body operator? It is shown in \cite{FNRZ} that in a sufficiently small neighborhood of the ball in the Banach-Mazur distance there are no other fixed points of the intersection body operator. However, in general this question is still open. 

In this paper we concentrate on the symmetric bodies of revolution to study the local convexity properties at the equator for intersection bodies. Throughout the paper we assume that the axis of revolution for any body of revolution is the $e_1$-axis. In this case $\rho^{}_K(\te)$ denotes the radial function of a body $K$ revolution at a direction whose angle from the $e_1$-axis is $\te$. Then, following \cite[Theorem C.2.9]{Ga2}, the radial function $\rd{I\!K}(\te)$ of the intersection body of $K$ is given as, for $\te\in(0,\pi\!/2]$, 
\begin{equation}\label{eq:gardner_IK}
\rd{I\!K}(\te) = \frac{c_n}{\sin\te}\int_{\pi\!/2-\te}^{\pi\!/2} \rd{K}(\varphi)^{n-1}\brS{1-\frac{\cos^2\varphi}{\sin^2\te}}^{\frac{n-4}{2}}\sin\varphi\, d\varphi
\end{equation}
and
\begin{equation}\label{eq:gardner_IK_axis}
\rd{I\!K}(0)=\abs{\rd{K}(\pi\!/2)\,B_2^{n-1}}=c_nd_n\,\sqrt{\pi\!/(2n)}\,\,\rd{K}(\pi\!/2)^{n-1},
\end{equation}
where $c_n=\frac{n-2}{n-1}\cdot\frac{2\pi^{n/2-1}}{\Gamma(n/2)}$ and $d_n=\frac{n-1}{n-2}\cdot\frac{\sqrt{n/2}\,\Gamma(n/2)}{\Gamma((n+1)/2)}\to 1$ as $n\to\infty$. Since a dilation of the body does not change its regularity or convexity, all through the paper we will replace $c_n$ by $1$.

In Section 2 we introduce several concepts containing the equatorial power type to describe quantitative information about convexity of bodies of revolution. 

In Section 3 we investigate the equatorial behavior of symmetric intersection bodies of revolution under the convexity assumption. We prove that if $K$ is a symmetric convex body of revolution, then the intersection body of $K$ has uniform equatorial power type 2, which means that its boundary near the equator is asymptotically the same as the ball. Using this result, we prove in Section 4 that if $K$ is a symmetric convex body of revolution in sufficiently high dimension, then its double intersection body is close, in the Banach-Mazur distance, to the Euclidean ball.

In Section 5 we will study the local convexity of intersection bodies at the equator without the convexity assumption. We prove that the intersection body of a symmetric star body of revolution in dimension $n\geq 5$ is locally convex at the equator, with equatorial power type 2.

\medskip
\noindent{\bf Acknowledgments}. It is with great pleasure that we thank Dmitry Ryabogin and Artem Zvavitch for helpful advice during the preparation of this paper.

\section{Equatorial power type for bodies of revolution}

The {\em equator} of a body $K$ of revolution is the boundary of the section of $K$ by the central hyperplane perpendicular to the axis of revolution, i.e., $\partial K\cap e_1^\perp$.  The goal of this section is to introduce parameters to measure the local convexity at the equator of bodies of revolution.

\begin{definition}
Let $K$ be a body of revolution in $\R^n$ about the $e_1$-axis, and let $1\le p<\infty$. Then the {\em function $\psi_K:\R\to\R^+$} is defined by 
\begin{equation}\label{eq:def.psi_K}
\psi_K(x)=\rd{K}(\te)|\sin\te| \quad\text{for }\te=\tan^{-1}(1/x).
\end{equation}
A body $K$ of revolution is said to have {\em equatorial power type $p$} if there exist constants $c_1,c_2>0$, depending on $K$, such that $c_1< |\psi_K(x)-\psi_K(0)|/x^p<c_2$ for every $x\in\R$. 
If $K$ is a symmetric convex body, then $\psi_K$ is a continuous even function which is non-increasing in $[0,+\infty)$ and $\psi_K(x)=O(1/x)$ 
as $x$ tends to infinity. 
\begin{figure}[h!]
\centering
\includegraphics[scale=.7,page=1]{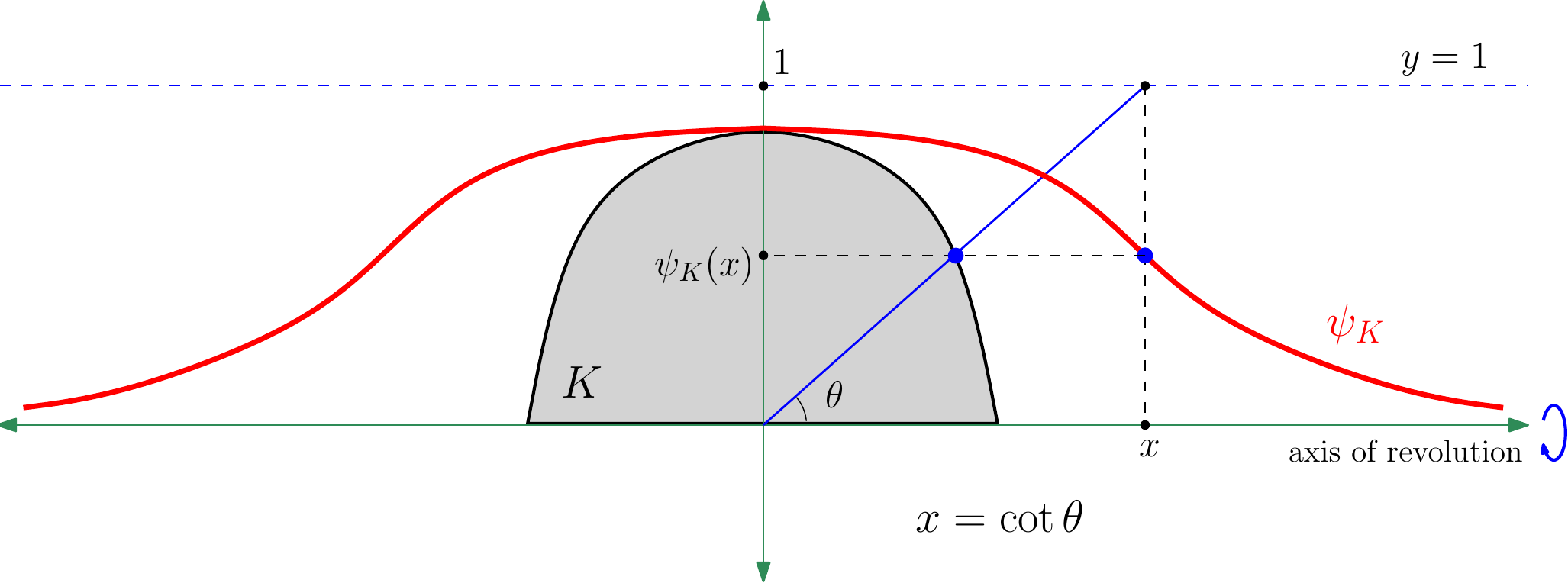}
\caption{The function $\psi_K$}
\label{psi}
\end{figure}
The local convexity properties of the function $\psi_K$ at $x=0$ are the same as those of the body $K$ at the equator.
\end{definition}

The formulas \eqref{eq:gardner_IK} and \eqref{eq:gardner_IK_axis} provide a very nice relation between $\psi_K$ and $\psi_{I\!K}$. Indeed, \eqref{eq:gardner_IK} implies
\begin{align*}
\rd{I\!K}(\te)\sin\te &=\int_0^\te \brS{\rd{K}(\pi\!/2-\phi)\cos\phi}^{n-1}\brS{\frac{1}{\cos^2\phi}}^{\frac{n-4}{2}}\brS{1-\frac{\sin^2\phi}{\sin^2\te}}^{\frac{n-4}{2}} \frac{d\phi}{\cos^2\phi} \\
&= \int_0^\te \brS{\rd{K}(\pi\!/2-\phi)\cos\phi}^{n-1}\brS{1-\frac{\tan^2\phi}{\tan^2\te}}^{\frac{n-4}{2}}\frac{d\phi}{\cos^2\phi} \\
&= \int_0^{\tan\te}\psi_K(t)^{n-1}\brS{1-t^2\cot^2\te}^{\frac{n-4}{2}}dt.
\end{align*}
Thus we have
\begin{align}
\psi_{I\!K}(0) &=\rd{I\!K}(\pi\!/2)= \int_0^\infty \psi_K(t)^{n-1}dt, \label{eq:formula2}\\
\psi_{I\!K}(x) &=\int_0^{1/x} \psi_K(t)^{n-1}\brS{1-x^2t^2}^{\frac{n-4}2}dt,\quad x\in(0,\infty). \label{eq:formula1}
\end{align}

As another way to describe equatorial power type, we may consider the classical modulus of convexity of a symmetric convex body $K$ is defined as \[\de_K(\eps)= \inf\set{1-\norm{\frac{x+y}{2}}_K: x,y\in K, \norm{x-y}_K\ge\eps},\] where $\norm{\cdot}_K$ denotes the Minkowski functional of $K$. However, since we focus on the convexity around the equator for bodies of revolution, it would be better to consider the following related notion.
\begin{definition}
Let $K$ be a symmetric convex body of revolution in $\R^n$ about the $e_1$-axis. The \emph{modulus of convexity of $K$ at the equator} is defined by
\begin{align*}
\de_K^{e}(\eps)  &= \inf\set{1-\norm{\frac{x+y}{2}}_K: x,y\in K, \norm{x-y}_K\ge2\eps, x-y\in\spn\set{e_1}}.
\end{align*}
Equivalently $\de^{e}_K$ can be expressed as
\begin{equation}\label{modconv}
\de^{e}_K(\eps)=\frac{\rd{K}(\pi\!/2)-\rd{K}(\te)\sin\te}{\rd{K}(\pi\!/2)}=\frac{\psi_K(0)-\psi_K(\cot\te)}{\psi_K(0)},
\end{equation}
where the angle $\te$ is obtained from $\eps=\frac{\rd{K}(\te)}{\rd{K}(0)}\cdot\cos\te$.
\end{definition}
It follows from \eqref{modconv} that a symmetric convex body $K$ of revolution has equatorial power type $p$ if and only if there exist constants $c_1,c_2>0$ such that $c_1<\de_K^{e}(\eps)/\eps^p<c_2$ for all $\eps\in(0,1]$. Moreover, differently from the function $\psi_K$ for a (star) body $K$ of revolution, we notice that the modulus of convexity for a convex body of revolution is invariant for any dilations on the axis of revolution or its orthogonal complement. 

For example, if $K$ is the body of revolution in $\R^n$ obtained by rotating a 2-dimensional $\ell_p$-ball with respect to the axis $e_1$, then it has equatorial power type $p$; more precisely $\de^e_K(\eps)=\eps^p/p+o(\eps^p)$.

\begin{definition}
For $1\le p<\infty$, a collection ${\mathcal C}$ of convex bodies of revolution is said to have {\em uniform equatorial power type $p$} if every convex body in ${\mathcal C}$ has equatorial power type $p$, and moreover there exist uniform constants $c_1,c_2>0$ such that 
\begin{equation*}
c_1< \frac{\de^{e}_K(\eps)}{\eps^p}<c_2\quad\text{for every $\eps\in(0,1]$ and $K\in{\mathcal C}$}.
\end{equation*}
\end{definition}
Now let us show some relation between $\de_K^e$ and $\psi_K$ when $K$ is a symmetric convex body. Fix $\eps\in(0,1]$ and choose the angle $\te\in(0,\pi\!/2)$ so that $\eps=(\rd{K}(\te)/\rd{K}(0))\cdot\cos\te$. Then, 
\begin{equation}\label{eq:convexity_K_eps}
(1-\eps)\rd{K}(\pi\!/2)\le\rd{K}(\te)\sin\te\le\rd{K}(\pi\!/2),
\end{equation}
which can be obtained by applying convexity property of $K$ to three points on the boundary of $K$ with angles $0$, $\te$, and $\pi\!/2$. Notice that \eqref{eq:convexity_K_eps} gives $$\cot\te=\frac{\rd{K}(0)\cdot\eps}{\rd{K}(\te)\sin\te}={\rd{K}(0)\over\rd{K}(\pi\!/2)}\brR{\eps + O(\eps^2)}.$$
Next it follows from \eqref{modconv} that 
\begin{equation}\label{eq:delta.psi_formula}
\de^e_K(\eps)={\psi_K(0)-\psi_K(\de)\over\psi_K(0)},\quad\text{for }\de={\rd{K}(0)\over\rd{K}(\pi\!/2)}\brR{\eps +O(\eps^2)}.
\end{equation}
In particular, we have $\de^e_K(\eps)\approx1-\psi_K(\eps)$ under the assumption $\rd{K}(0)=\rd{K}(\pi\!/2)=1$. 

In Section~\ref{sec:powertype2} we prove that the class of all intersection bodies of symmetric convex bodies of revolution have uniform equatorial power type 2, and we also provide an example showing that the convexity condition cannot be dropped. Thus, for star bodies of revolution, it is not necessary to consider $\de^e_K$ as an invariant quantity under dilations on the axis $e_1$ or its orthogonal complement; the function $\psi_K$ will be enough for star bodies. For symmetric convex bodies of revolution, we will use the modulus $\de^e_K(\eps)$ of convexity at the equator to describe the power type or the asymptotic behavior at the equator, and moreover the function $\psi_K(x)$ may be used to compute $\de^e_K(\eps)$ by \eqref{eq:delta.psi_formula}.

\section{Uniform equatorial power type 2 for intersection bodies}\label{sec:powertype2}

In this section we prove that the class of all intersection bodies of symmetric convex bodies of revolution has uniform equatorial power type 2. Namely, if $K$ is a symmetric convex body of revolution, then $I\!K$ has equatorial power type $2$ and, moreover, the coefficient of the quadratic term in the expansion of $\de^e_{I\!K}(\eps)$ is bounded above and below by absolute constants. 

First we need a specific formula for the function $\psi_K$ in the case that $K$ is a symmetric body of revolution obtained by rotating line segments.

\begin{lemma}\label{lem:easy_bounds_psi_K}
Let $L_{a,b}\subset\R^n$ be the symmetric body of revolution whose boundary is determined by a line segment $\set{(x,y):ax+by=1, 0\le x\le 1/a}$ for $a,b\ge0$. Namely, the body $L_{a,b}$ can be given by $$L_{a,b}=\set{(x,y)\in\R^n=\R\times\R^{n-1}:a|x|+b|y|\le1}.$$ Then the function $\psi_{L_{a,b}}$, defined in \eqref{eq:def.psi_K}, is equal to
\begin{equation}\label{eq:psi_for_line}
\psi_{L_{a,b}}(x)=\frac1{a|x|+b}.
\end{equation}
Moreover, if $K\subset\R^n$ is a symmetric convex body of revolution with $\rd{K}(0)=\rd{K}(\pi\!/2)=1$, then
\begin{equation}\label{eq:easy_bounds_psi_K}
\frac1{|x|+1}\le\psi_K(x)\le\min\brR{1,\frac1{|x|}}.
\end{equation}
\end{lemma}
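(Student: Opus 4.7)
The plan is to establish \eqref{eq:psi_for_line} by direct computation on the defining linear equation of $L_{a,b}$, then to deduce both inequalities in \eqref{eq:easy_bounds_psi_K} from simple geometric inclusions: a bounding box from above and a cross-polytope from below. For the first part, a ray from the origin in direction $(\cos\te,\sin\te\cdot v)$ with $|v|=1$ and $\te\in(0,\pi\!/2]$ meets the boundary $a|x_1|+b|y|=1$ of $L_{a,b}$ at $r=1/(a\cos\te+b\sin\te)$, so $\rd{L_{a,b}}(\te)=1/(a\cos\te+b\sin\te)$. Substituting $\te=\tan^{-1}(1/x)$, so that $\cos\te/\sin\te=x$ for $x\ge 0$, yields
$$\psi_{L_{a,b}}(x)=\rd{L_{a,b}}(\te)\sin\te=\frac{1}{a(\cos\te/\sin\te)+b}=\frac{1}{ax+b},$$
and the extension to $x<0$ is immediate from the symmetry $\psi_{L_{a,b}}(-x)=\psi_{L_{a,b}}(x)$.

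For the upper bound in \eqref{eq:easy_bounds_psi_K}, I would use the standard profile description of $K$ as a body of revolution:
$$K=\set{(t,y)\in\R\times\R^{n-1}:t\in[-1,1],\,|y|\le r(t)},$$
where $r$ is concave, even, and non-negative on $[-1,1]$ with $r(\pm 1)=0$ (from $\rd{K}(0)=1$) and $r(0)=\rd{K}(\pi\!/2)=1$. Concavity combined with the evenness $r(-t)=r(t)$ forces $r(t)\le r(0)=1$ for every $t$, so $K\subset[-1,1]\times B_2^{n-1}$. Writing the boundary point at angle $\te$ as $\rd{K}(\te)(\cos\te,\sin\te\cdot v)$, this inclusion gives $\rd{K}(\te)|\cos\te|\le 1$ and $\rd{K}(\te)|\sin\te|\le 1$. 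The second inequality reads $\psi_K(x)\le 1$, while the first rewrites as $|x|\psi_K(x)=\rd{K}(\te)|\cos\te|\le 1$, giving $\psi_K(x)\le 1/|x|$.

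For the lower bound I would observe that $K$ contains the two poles $\pm e_1$ together with the entire equatorial unit ball in $e_1^\perp$, so by convexity it contains their convex hull, which is precisely $L_{1,1}=\set{(t,y):|t|+|y|\le 1}$. Radial monotonicity under inclusion, together with the formula \eqref{eq:psi_for_line} from the first part, then yields $\psi_K(x)\ge\psi_{L_{1,1}}(x)=1/(|x|+1)$. The only slightly delicate point in the whole argument is the a priori bound $r(t)\le 1$: without the origin-symmetry of $K$, concavity alone does not suffice (a concave function with $r(0)=1$ can easily exceed $1$ on one side), so one genuinely needs the evenness of $r$ to conclude that $K$ sits in the slab $[-1,1]\times B_2^{n-1}$ and hence to obtain $\psi_K(x)\le 1$.
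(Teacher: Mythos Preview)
Your proof is correct and follows essentially the same route as the paper's. Both arguments compute $\psi_{L_{a,b}}$ by substituting the boundary point into the line $ax+by=1$, and both deduce \eqref{eq:easy_bounds_psi_K} from the inclusions $L_{1,1}\subset K\subset B_\infty$; the only cosmetic difference is that the paper reads off $\psi_{B_\infty}=\min(1,1/|x|)$ by applying the first part to $L_{0,1}$ and $L_{1,0}$, whereas you extract the two coordinate bounds $\rd{K}(\te)|\sin\te|\le1$ and $\rd{K}(\te)|\cos\te|\le1$ directly.
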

\begin{proof}
Let $x=\cot\te >0$, and write $L=L_{a,b}$ to shorten the notation. Then the point $$\brR{\rd{L}(\te)\cos\te, \rd{L}(\te)\sin\te}\in\R^2$$ lies on the straight line $\set{(x,y)\in\R^2:ax+by=1}$. Thus we have 
\begin{align*}
x &= \cot\te=\frac{\rd{L}(\te)\cos\te}{\rd{L}(\te)\sin\te}=\frac{(1-b\rd{L}(\te)\sin\te)/a}{\rd{L}(\te)\sin\te}\\
&=\frac{1-b\,\psi_L(x)}{a\,\psi_L(x)},
\end{align*}
which gives $\psi_L(x)=1/(ax+b)$.

Now, if $K\subset\R^n$ is a symmetric convex body of revolution with $\rd{K}(0)=\rd{K}(\pi\!/2)=1$, then we have $B_1\subset K\subset B_\infty$ where
\begin{align*}
B_1\, &=\set{(x,y)\in\R^n=\R\times\R^{n-1}:|x|+|y|\le1}\tag{double cone}\\
B_\infty&=\set{(x,y)\in\R^n=\R\times\R^{n-1}:|x|\le1, |y|\le1}.\tag{cylinder}
\end{align*}
We also see that $\psi_{B_1}\le\psi_K\le\psi_{B_\infty}$ by definition of the function $\psi$.  Here $\psi_{B_1}$ and $\psi_{B_\infty}$ can be obtained from \eqref{eq:psi_for_line}: $$\psi_{B_1}(x)=\psi_{L_{1,1}}(x)=\frac1{x+1}$$ and 
$$
\psi_{B_\infty}(x)=
\begin{cases}
\psi_{L_{0,1}}(x)=1, &\text{if } 0\le x\le1 \\
\psi_{L_{1,0}}(x)=1/x, &\text{if } x\ge1
\end{cases}
$$
which imply \eqref{eq:easy_bounds_psi_K}.

\end{proof}

The inequality \eqref{eq:easy_bounds_psi_K} in Lemma \ref{lem:easy_bounds_psi_K} gives an easy upper or lower bound for the function $\psi_K$. However, we will need better other bounds for high dimension, which are given by the following Lemma. 

\begin{lemma}\label{lem:convexity}
Let $K\subset\R^n$ be a convex body of revolution with $\rd{K}(\pi\!/2)=1$.
For every $\si>0$ and $t>1$, 
\begin{equation}\label{eq:upperbound_of_psiK}
\psi_K(\si t)\le \brS{1+t\brR{\frac{1}{\psi_K(\si)}-1}}^{-1}.
\end{equation}
\end{lemma}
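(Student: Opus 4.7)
The plan is to interpret \eqref{eq:def.psi_K} geometrically and reduce \eqref{eq:upperbound_of_psiK} to a chord comparison in a planar section of $K$. Slicing $K$ by any $2$-plane through the axis of revolution produces a symmetric convex body $L\subset\R^2$ whose axial radial function coincides with that of $K$, so $\psi_L=\psi_K$ and $\psi_K(0)=\rd{K}(\pi/2)=1$. With $\te=\tan^{-1}(1/x)$, the boundary point of $L$ at angle $\te$ has Cartesian coordinates
\[
\bigl(\rd{L}(\te)\cos\te,\ \rd{L}(\te)\sin\te\bigr)=\bigl(x\psi_K(x),\ \psi_K(x)\bigr),\qquad x\ge 0,
\]
so the upper first-quadrant boundary of $L$ is the graph $y_0=f(x_0)$ of a concave, non-increasing function $f$ with $f(0)=1$.

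Next I would consider the three boundary points $A=(0,1)$, $B=(\sigma\psi_K(\sigma),\psi_K(\sigma))$ and $C=(\sigma t\,\psi_K(\sigma t),\psi_K(\sigma t))$, and observe that $x\mapsto x\psi_K(x)$, which records the $x_0$-coordinate on the boundary, is non-decreasing in $x\ge 0$. This follows from $f$ being non-increasing: if $x_0^{(1)}<x_0^{(2)}$ were two boundary $x$-coordinates, then $f(x_0^{(1)})\ge f(x_0^{(2)})$ would force the polar angle from $e_1$ at $x_0^{(1)}$ to exceed that at $x_0^{(2)}$, which matches the ordering $x=\cot\te$. Hence $\sigma\psi_K(\sigma)\le\sigma t\,\psi_K(\sigma t)$, so $B$ lies between $A$ and $C$ in the $x_0$-direction, and the convexity of $L$ places $B$ on or above the chord $AC$.

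Parametrising $AC$ by its $x_0$-coordinate, the chord's height at $x_0=\sigma\psi_K(\sigma)$ equals $1-\lambda\bigl(1-\psi_K(\sigma t)\bigr)$ with
\[
\lambda=\frac{\sigma\psi_K(\sigma)}{\sigma t\,\psi_K(\sigma t)}=\frac{\psi_K(\sigma)}{t\,\psi_K(\sigma t)}\in[0,1],
\]
so that
\[
\psi_K(\sigma)\ \ge\ 1-\frac{\psi_K(\sigma)}{t\,\psi_K(\sigma t)}\bigl(1-\psi_K(\sigma t)\bigr).
\]
Clearing denominators and collecting terms gives $\psi_K(\sigma t)\bigl[t(1-\psi_K(\sigma))+\psi_K(\sigma)\bigr]\le\psi_K(\sigma)$, which upon dividing by $\psi_K(\sigma)$ is precisely \eqref{eq:upperbound_of_psiK}. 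The only non-routine point in the argument is justifying the monotonicity of $x\psi_K(x)$; once that is in hand, the proof is a single chord comparison followed by a short algebraic rearrangement.
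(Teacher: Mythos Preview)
Your proof is correct and follows essentially the same route as the paper: both arguments pass to a planar section, take the three boundary points at angles $\pi/2$, $\tan^{-1}(1/\sigma)$, $\tan^{-1}(1/\sigma t)$, and use convexity to compare the middle point with the chord through the outer two, after which the same algebraic rearrangement yields \eqref{eq:upperbound_of_psiK}. The only difference is that you make explicit the monotonicity of $x\mapsto x\psi_K(x)$ needed to know the points are in the right order, whereas the paper leaves this implicit in its slope inequality $\frac{1-y_2}{1-y_1}\ge\frac{x_2}{x_1}$.
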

\begin{proof}
Let $\phi_1=\tan^{-1}(1/\si)$ and $\phi_2=\tan^{-1}(1/\si t)$. Choose three points $P_0,P_1,P_2\in\partial K\cap\spn\set{e_1,e_2}$ whose angles from the $e_1$-axis are $\pi\!/2$, $\phi_1$ and $\phi_2$, respectively. That is, 
\begin{eqnarray*}
P_0 &=& \brR{0,1} \\
P_1 &=& \brR{\rd{K}(\phi_1)\cos\phi_1,\rd{K}(\phi_1)\sin\phi_1}=:(x_1,y_1) \\
P_2 &=& \brR{\rd{K}(\phi_2)\cos\phi_2,\rd{K}(\phi_2)\sin\phi_2}=:(x_2,y_2). 
\end{eqnarray*}
Since $\frac{1-y_2}{1-y_1}\ge\frac{x_2}{x_1}$ by convexity of $K$,
\begin{equation*}
\frac{1-\rd{K}(\phi_2)\sin\phi_2}{1-\rd{K}(\phi_1)\sin\phi_1} \ge \frac{\rd{K}(\phi_2)\cos\phi_2}{\rd{K}(\phi_1)\cos\phi_1}=\frac{\rd{K}(\phi_2)\sin\phi_2\cdot \cot\phi_2}{\rd{K}(\phi_1)\sin\phi_1\cdot \cot\phi_1},
\end{equation*}
which implies $$\frac{1-\psi_K(\si t)}{1-\psi_K(\si)}\ge \frac{\psi_K(\si t)\si t}{\psi_K(\si)\si}.$$ Simplifying the above inequality, we have the inequality \eqref{eq:upperbound_of_psiK}.
\end{proof}

The next Lemma will be helpful to bound the integral in \eqref{eq:formula2}, and to control its tail.
\begin{lemma}\label{lem:approx}
Let $K\subset\R^n$, for $n\ge4$, be a convex body of revolution with $\rd{K}(\pi\!/2)=1$, and let $\si_K=\psi_K^{-1}(1-1/n)$. Then 
\begin{equation}\label{eq:estimate_integral_psi}
c_1\le \frac1{\si_K}\int_0^\infty\psi_K(t)^{n-1} dt\le c_2,
\end{equation}
where $c_1$, $c_2>0$ are absolute constants.
In addition, for every $R>1$,
$$\int_R^\infty \psi_K(\si_Kt)^{n-1} dt = O\brR{\brS{1+{R}/{n}}^{2-n}}.$$ Here, $f(\eps)=O(\eps)$  means that $|f(\eps)|\le c\eps$ for small $\eps>0$ and an absolute constant $c>0$.
\end{lemma}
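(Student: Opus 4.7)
The plan is to specialize the pointwise upper bound \eqref{eq:upperbound_of_psiK} of Lemma~\ref{lem:convexity} at the distinguished scale $\sigma=\sigma_K$. Since $\psi_K(\sigma_K)=1-1/n$, we have $1/\psi_K(\sigma_K)-1=1/(n-1)$, so Lemma~\ref{lem:convexity} gives
$$\psi_K(\sigma_K s)\le\left(1+\frac{s}{n-1}\right)^{-1}\qquad\text{for every }s>1.$$
Combined with the monotonicity of $\psi_K$ on $[0,\infty)$ (a consequence of convexity for our symmetric bodies of revolution, as noted in Section~2), this single inequality drives everything. The natural first step is the change of variables $t=\sigma_K s$, which normalizes the integral in \eqref{eq:estimate_integral_psi} to $\int_0^\infty\psi_K(\sigma_K s)^{n-1}\,ds$; the task is then to show that this quantity lies between two absolute constants.

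For the upper bound I would split at $s=1$. On $[0,1]$ the integrand is bounded by $\psi_K(0)^{n-1}=1$, and on $[1,\infty)$ the displayed pointwise bound yields
$$\int_1^\infty\psi_K(\sigma_K s)^{n-1}\,ds\le\int_1^\infty\!\left(1+\frac{s}{n-1}\right)^{1-n}\!ds=\frac{n-1}{n-2}\left(\frac{n-1}{n}\right)^{n-2},$$
which is uniformly bounded for $n\ge 4$ (and tends to $1/e$ as $n\to\infty$). For the lower bound, monotonicity gives $\psi_K(\sigma_K s)\ge\psi_K(\sigma_K)=1-1/n$ on $[0,1]$, whence the integral is at least $(1-1/n)^{n-1}$, again an absolute constant. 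The tail estimate follows by the same recipe: for $R>1$,
$$\int_R^\infty\psi_K(\sigma_K t)^{n-1}\,dt\le\int_R^\infty\!\left(1+\frac{t}{n-1}\right)^{1-n}\!dt=\frac{n-1}{n-2}\left(1+\frac{R}{n-1}\right)^{2-n},$$
and since $2-n<0$ and $1+R/(n-1)\ge 1+R/n$, this last quantity is at most a constant multiple of $(1+R/n)^{2-n}$.

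There is no serious obstacle here: all the geometric content has already been packaged into Lemma~\ref{lem:convexity}, and what remains is bookkeeping. The one small point to watch is the passage from the natural base $1+R/(n-1)$ produced by the integration to the stated base $1+R/n$ in the tail bound; however the direction of the inequality is forced by the sign of the exponent, and the prefactor $(n-1)/(n-2)$ is bounded for $n\ge 4$, so it is absorbed into the implicit constant in the $O$-notation.
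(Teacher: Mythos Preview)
Your proof is correct and essentially identical to the paper's: both specialize Lemma~\ref{lem:convexity} at $\sigma=\sigma_K$ to obtain $\psi_K(\sigma_K s)\le(1+s/(n-1))^{-1}$ for $s>1$, then split the integral at $s=1$ (equivalently $t=\sigma_K$), using $\psi_K\le 1$ above and $\psi_K\ge 1-1/n$ below for the upper and lower bounds, and integrate the pointwise majorant for the tail. Your explicit remark on passing from the base $1+R/(n-1)$ to $1+R/n$ is a small clarification the paper leaves implicit.
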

\begin{proof}
For any $t\ge R$, Lemma \ref{lem:convexity} gives 
\begin{equation*}
\psi_K(\si_K t)\le \brS{1+\frac{t}{\psi_K(\si_K)}-t}^{-1}= \brS{1+\frac{t}{n-1}}^{-1}.
\end{equation*}
Thus
\begin{align*}
\int_R^\infty \psi_K(\si_Kt)^{n-1} dt &\le \int_R^\infty \brS{1+\frac{t}{n-1}}^{1-n} dt = \frac{n-1}{n-2}\brS{1+\frac{R}{n-1}}^{2-n}.
\end{align*}
Next we will show an upper bound in \eqref{eq:estimate_integral_psi}, 
\begin{align*}
\int_0^\infty\psi_K(t)^{n-1}dt &=\int_0^{\si_K}\psi_K(t)^{n-1} dt + \int_{\si_K}^\infty \psi_K(t)^{n-1}dt\\
&\le \si_K + \si_K\int_1^\infty \psi_K(\si_Kt)^{n-1} dt \\
&= \si_K + \si_K \frac{n-1}{n-2}\brS{1+\frac{1}{n-1}}^{2-n} \to (1+1/e)\si_K \quad\text{as }n\to\infty.
\end{align*}
For a lower bound,
$$\int_0^{\si_K} \psi_K(t)^{n-1} dt \ge \int_0^{\si_K} \brS{1-\frac1n}^{n-1} dt \quad\to \frac{\si_K}{e}\quad\text{as }n\to\infty.$$
Thus we have that $c_1\si_K\le\int_0^\infty\psi_K(t)^{n-1} dt\le c_2\si_K$ for absolute constants $c_1$, $c_2>0$.
\end{proof}

Next, Lemma \ref{lem:formula_R} will allow us to estimate the integral in \eqref{eq:formula1}.
\begin{lemma}\label{lem:formula_R}
Let $K\subset\R^n$, for $n\ge4$, be a symmetric convex body of revolution with $\rd{K}(\pi\!/2)=1$. Fix $R>1$ and let $\si_K=\psi_K^{-1}(1-1/n)$. Then, for each $x\le\frac1{R\si_K}$,
$$\frac{\psi_{I\!K}(x)}{\si_K} = \int_0^R \psi_K(\si_Kt)^{n-1}\brS{1-\si_K^2x^2t^2}^{\frac{n-4}2}dt  + O\brR{\brS{1+R/n}^{2-n}}.$$
\end{lemma}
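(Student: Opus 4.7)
The plan is to reduce everything to the formula \eqref{eq:formula1} by rescaling and then to discard a negligible tail using the bound from Lemma~\ref{lem:approx}. First I would make the substitution $t\mapsto\si_K t$ in \eqref{eq:formula1}. This gives
\[
\psi_{I\!K}(x) \;=\; \si_K\int_0^{1/(x\si_K)} \psi_K(\si_K t)^{n-1}\brS{1-\si_K^2 x^2 t^2}^{\frac{n-4}{2}}dt,
\]
so that $\psi_{I\!K}(x)/\si_K$ is expressed as an integral whose upper limit is $1/(x\si_K)$.

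Next, under the hypothesis $x\le 1/(R\si_K)$ the upper limit satisfies $1/(x\si_K)\ge R$, so I would split the integral at $t=R$:
\[
\frac{\psi_{I\!K}(x)}{\si_K}=\int_0^R \psi_K(\si_Kt)^{n-1}\brS{1-\si_K^2 x^2 t^2}^{\frac{n-4}2}dt \;+\; \int_R^{1/(x\si_K)}\psi_K(\si_K t)^{n-1}\brS{1-\si_K^2 x^2 t^2}^{\frac{n-4}2}dt.
\]
The first term is exactly the main term in the conclusion, so it remains to show that the second (tail) integral is $O\!\brR{\brS{1+R/n}^{2-n}}$.

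To bound the tail I would observe that on the range $t\in[R,1/(x\si_K)]$ we have $0\le \si_K^2x^2t^2\le 1$, and since $n\ge 4$ the exponent $(n-4)/2$ is non-negative, so $[1-\si_K^2x^2t^2]^{(n-4)/2}\le 1$. Therefore the tail integral is bounded by
\[
\int_R^\infty \psi_K(\si_K t)^{n-1}\,dt,
\]
which by Lemma~\ref{lem:approx} is $O\!\brR{\brS{1+R/n}^{2-n}}$. Combining both pieces yields the claimed identity. No step here is genuinely hard; the only thing to be careful about is verifying that the dilation $t\mapsto\si_K t$ matches with the definition of $\si_K=\psi_K^{-1}(1-1/n)$ so that Lemma~\ref{lem:approx} applies directly to the rescaled function $\psi_K(\si_K\,\cdot)$.
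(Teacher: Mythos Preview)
Your argument is correct and matches the paper's proof essentially line for line: substitute $t\mapsto\si_K t$ in \eqref{eq:formula1}, split the integral at $R$, bound the factor $[1-\si_K^2x^2t^2]^{(n-4)/2}\le 1$ on the tail, and invoke Lemma~\ref{lem:approx}. The only minor addition the paper makes is to treat the case $x=0$ separately via \eqref{eq:formula2}, which you should mention since \eqref{eq:formula1} is stated only for $x>0$.
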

\begin{proof}
Apply  Lemma \ref{lem:approx}.
If $x\neq0$, then
\begin{align*}
\psi_{I\!K}(x) &= \int_0^{1/x} \psi_K(t)^{n-1}\brS{1-x^2t^2}^{\frac{n-4}2}dt \\
&\le\si_K\int_0^R \psi_K(\si_Kt)^{n-1}\brS{1-\si_K^2x^2t^2}^{\frac{n-4}{2}}dt  + \si_K\int_R^\infty \psi_K(\si_Kt)^{n-1}dt\\
&=\si_K\brS{\int_0^R \psi_K(\si_Kt)^{n-1}\brS{1-\si_K^2x^2t^2}^{\frac{n-4}{2}}dt  + O\brR{\brS{1+R/n}^{2-n}}}. 
\end{align*}
If $x=0$, then
\begin{align*}
\psi_{I\!K}(0) &= \int_0^\infty \psi_K(t)^{n-1}dt =\si_K\brS{\int_0^R \psi_K(\si_Kt)^{n-1}dt  + O\brR{\brS{1+R/n}^{2-n}}}. 
\end{align*}
\end{proof}

Now we are ready to prove the main result of this section.

\begin{theorem}\label{thm:uniform_convexity_IK}
The class of intersection bodies of symmetric convex bodies of revolution in dimension $n \geq 4$ has uniform equatorial power type 2.  Namely, if $K$ is a symmetric convex body of revolution in $\R^n$ for $n\ge4$, then its intersection body $I\!K$ has modulus of convexity at the equator of the form $$\de^e_{I\!K}(\eps)=c_K\eps^2+O(\eps^3)$$ where $c_K>0$ is a constant depending on $K$ bounded above and below by absolute constants.
\end{theorem}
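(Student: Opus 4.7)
The plan is to Taylor-expand $\psi_{I\!K}(x)$ around $x=0$ and then translate this into an expansion of $\de^e_{I\!K}(\eps)$ via \eqref{eq:delta.psi_formula}. By the axial-dilation invariance of $\de^e_K$, I would first normalize so that $\rd{K}(0)=\rd{K}(\pi/2)=1$. Then by \eqref{eq:gardner_IK_axis} and Lemma \ref{lem:approx}, $\rd{I\!K}(0)\asymp 1/\sqrt{n}$ and $\psi_{I\!K}(0)=\rd{I\!K}(\pi/2)\asymp\si_K$, both with absolute constants, where $\si_K=\psi_K^{-1}(1-1/n)$. Applying \eqref{eq:delta.psi_formula} to $I\!K$ yields
\[
\de^e_{I\!K}(\eps)=\frac{\psi_{I\!K}(0)-\psi_{I\!K}(\delta)}{\psi_{I\!K}(0)},\qquad \delta=\alpha\brR{\eps+O(\eps^2)},\qquad \alpha:=\frac{\rd{I\!K}(0)}{\psi_{I\!K}(0)}\asymp\frac{1}{\si_K\sqrt{n}},
\]
so it suffices to expand $\psi_{I\!K}(0)-\psi_{I\!K}(x)$ in $x$ near $0$.

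Subtracting \eqref{eq:formula1} from \eqref{eq:formula2} gives
\[
\psi_{I\!K}(0)-\psi_{I\!K}(x)=\int_0^{1/x}\psi_K(t)^{n-1}\brS{1-(1-x^2t^2)^{(n-4)/2}}dt+\int_{1/x}^\infty\psi_K(t)^{n-1}dt.
\]
I would then substitute $t=\si_K s$ and Taylor-expand the bracket as $1-(1-\si_K^2x^2s^2)^{(n-4)/2}=\frac{n-4}{2}\si_K^2x^2s^2+O(n^2\si_K^4x^4s^4)$, valid when $n\si_K^2x^2s^2$ is small. The leading contribution then takes the form $\frac{n-4}{2}\si_K^3x^2\int_0^\infty s^2\psi_K(\si_K s)^{n-1}\,ds$, and I would show this second-moment integral is bounded above and below by absolute constants: the lower bound follows from $\psi_K(\si_K s)\ge 1-1/n$ on $s\in[0,1]$, and the upper bound from Lemma \ref{lem:convexity}, which dominates $\psi_K(\si_K s)$ by $[1+s/(n-1)]^{-1}$ for $s\ge1$.

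For the error terms, the tail $\int_{1/x}^\infty\psi_K^{n-1}dt$ and the contribution from the region where the Taylor expansion fails (say $s>R\asymp 1/\eps$) are both controlled by Lemma \ref{lem:approx}, giving $O\brR{\si_K(1+R/n)^{2-n}}$, super-exponentially small in $1/\eps$. Substituting $x=\delta$, dividing by $\psi_{I\!K}(0)\asymp\si_K$, and using $\si_K^2\alpha^2\asymp 1/n$ to cancel the $\si_K$-dependence, the main term becomes
\[
\de^e_{I\!K}(\eps)=c_K\eps^2+O(\eps^3),\qquad c_K\asymp 1,
\]
with $c_K$ of order $(n-4)/(2n)$ times an absolute constant.

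The hard part will be the Taylor-remainder integral: unlike the second moment, $\int_0^\infty s^4\psi_K(\si_K s)^{n-1}\,ds$ is not absolutely bounded and can even diverge in low dimensions (e.g.~$n=5,6$). Truncating at $s\asymp 1/\eps$ replaces divergence by a polynomial-in-$1/\eps$ bound, which multiplied by the $O(\eps^4)$ prefactor still yields $O(\eps^3)$ uniformly in $n$ and $K$. The boundary case $n=4$, in which the Taylor bracket vanishes identically, calls for a separate direct computation: after the change of variable $u=1/t$ in $\int_{1/x}^\infty\psi_K(t)^3\,dt$, the asymptotics $\psi_K(1/u)=u+O(u^2)$ as $u\to 0^+$ (from $\rd{K}(\te)\to 1$ and $\sin\te\sim\te$ at the axis) give the leading term $x^2/2+O(x^3)$, confirming $c_K\asymp 1$ also for $n=4$.
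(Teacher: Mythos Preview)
Your plan is essentially the paper's own approach: normalize $\rd{K}(0)=\rd{K}(\pi/2)=1$, subtract \eqref{eq:formula1} from \eqref{eq:formula2}, rescale by $\si_K$, Taylor-expand, and bound the second moment $\int s^2\psi_K(\si_Ks)^{n-1}ds$ using Lemma~\ref{lem:convexity} above and $\psi_K(\si_Ks)\ge1-1/n$ on $[0,1]$ below. The paper organizes the error control slightly differently: it takes $R=\eps^{-1/4}$, which forces a split at $n=14$ and a separate hands-on computation via Lemma~\ref{lem:easy_bounds_psi_K} for $4\le n<14$; your choice $R\asymp1/\eps$ is cleaner in that it handles all $n\ge5$ at once. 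One correction: the tail $(1+R/n)^{2-n}$ is \emph{not} super-exponentially small in $1/\eps$ for fixed small $n$ --- e.g.\ for $n=5$ and $R\asymp1/\eps$ it is only $\asymp\eps^3$ --- but that is precisely the order you need, so the argument still goes through. Your treatment of $n=4$ (using $\psi_K(1/u)=u+O(u^2)$, which follows from \eqref{eq:easy_bounds_psi_K}) matches the paper's.
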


\begin{proof}
The modulus of convexity at the equator is invariant for any dilations on the axis of revolution or its orthogonal complement, so we may start with $\rd{K}(\pi\!/2)=\rd{K}(0)=1$. Fix a small number $\eps>0$ and choose the angle $\te$ such that $${\rd{I\!K}(\te)\over\rd{I\!K}(0)}\,\cos\te=\eps.$$
Let $\de=\cot\te$ and $\si_K=\psi_K^{-1}(1-1/n)$. By Lemma \ref{lem:approx}, we have 
\begin{equation}\label{eq:psiIK.zero}
\psi_{I\!K}(0)=\int_0^\infty \psi_K(t)^{n-1}dt = d_K\si_K,
\end{equation}
where $c_1\le d_K\le c_2$ for absolute constants $c_1,c_2>0$.
By convexity of $I\!K$, as in \eqref{eq:convexity_K_eps},
\begin{equation}\label{eq:convexity_IK_pf_thm}
(1-\eps)\rd{I\!K}(\pi\!/2) \le \rd{I\!K}(\te)\sin\te \le \rd{I\!K}(\pi\!/2).
\end{equation}
Note that $\rd{I\!K}(\pi\!/2)=\psi_{I\!K}(0)= d_K\si_K$ and $\rd{I\!K}(\te)\sin\te=\brR{{\rd{I\!K}(\te)\over\rd{I\!K}(0)}\cos\te}\rd{I\!K}(0)\tan\te=(\eps/\de)\rd{I\!K}(0)$. Since $\rd{I\!K}(0)/\sqrt{\pi/(2n)}$ tends to 1 as $n\to\infty$  by \eqref{eq:gardner_IK_axis}, the inequality \eqref{eq:convexity_IK_pf_thm} implies that there exists absolute constants $c_1, c_2>0$ such that
\begin{equation}\label{eq:theta}
c_1\si_K\sqrt{n} \le\eps/\de\le c_2\si_K\sqrt{n}.
\end{equation}
First consider the case of $n\ge14$. The formula (\ref{eq:theta}) and Lemma \ref{lem:formula_R} give that, for any $R$ with $1\le R\le(\si_K\de)^{-1}$,
\begin{align}\label{eq:size.of.error}
\frac{\psi_{I\!K}(\de)}{\si_K} &= \int_0^R \psi_K(\si_Kt)^{n-1}\brS{1-(\si_K\de t)^2}^{\frac{n-4}2}dt  + O\brR{\brS{1+R/n}^{2-n}}\notag\\
&= \int_0^R \psi_K(\si_Kt)^{n-1}\brS{1-\frac{n-4}2(\si_K\de t)^2+O\brR{\abs{n(\si_K\de R)^2}^2}}dt  + O\brR{\brS{1+R/n}^{2-n}}
\end{align}
and \[\frac{\psi_{I\!K}(0)}{\si_K} = \int_0^R \psi_K(\si_Kt)^{n-1}dt  + O\brR{\brS{1+R/n}^{2-n}}.\]
Since $\si_K\de$ is comparable to $\eps/\sqrt{n}$ by absolute constants by \eqref{eq:theta}, we can take $R=\eps^{-1/4}$ to control error terms of above equation. Then we have $\abs{n(\si_K\de R)^2}^2=O(\eps^4R^4)=O(\eps^3$), and for $n\ge14$,
\begin{align*}
(1+R/n)^{2-n}&\le(1+R/n)^{-12}=\brS{\frac{n}{1+n\eps^{1/4}}}^{12}\eps^3\\
(1+R/n)^{2-n}&\to e^{-R}=e^{-\eps^{-1/4}}\quad\text{as }n\to\infty,
\end{align*}
so the remainder part of \eqref{eq:size.of.error} is $O(\eps^3)$ for $n\ge14$. Thus,
\begin{align}\label{eq:psiIK.delta}
\frac{\psi_{I\!K}(\de)}{\si_K} &= \int_0^R\psi_K(\si_Kt)^{n-1}dt - \frac12(n-4)(\si_K\de)^2\int_0^R\psi_K(\si_Kt)^{n-1}t^2dt+O(\eps^3) \notag\\
&=\frac{\psi_{I\!K}(0)}{\si_K} - \frac{\pi}{4d_K^2}\eps^2\int_0^R\psi_K(\si_Kt)^{n-1}t^2dt+O(\eps^3).
\end{align}
The formula \eqref{eq:delta.psi_formula}, together with \eqref{eq:psiIK.zero} and \eqref{eq:psiIK.delta}, gives the modulus of convexity at the equator as follows.
\begin{align*}
\de^e_{I\!K}(\eps) &= {\psi_{I\!K}(0)-\psi_{I\!K}(\de)\over\psi_{I\!K}(0)}=\frac{\pi}{4d_K^3} \eps^2\int_0^{R} \psi_K(\si_Kt)^{n-1}t^2dt + O(\eps^3).
\end{align*}
Now it is enough to compute $\int_0^{R} \psi_K(\si_Kt)^{n-1}t^2dt$. 
For a upper bound, apply Lemma \ref{lem:convexity}. Then, for any $t\ge1$, 
\begin{equation*}
\psi_K(\si_K t)\le \brS{1+\frac{t}{\psi_K(\si_K)}-t}^{-1}= \brS{1+\frac{t}{n-1}}^{-1}.
\end{equation*}
Thus
\begin{align*}
\int_0^{R}\psi_K(\si_Kt)^{n-1}t^2dx &\le \int_0^1dt + \int_1^\infty t^2\brS{1+\frac{t}{n-1}}^{1-n}\!\!dt = 1+ (n-1)^3\int_{\frac{n}{n-1}}^\infty (s-1)^2s^{1-n}ds\\
&=1+\brR{1-\frac1n}^{n-1}\!\!\!\frac{5n^3-15n^2+12n}{(n-2)(n-3)(n-4)} \to 1+\frac5e \quad\text{as }n\to\infty.
\end{align*}
For a lower bound,
\begin{equation*}
\int_0^{R} \psi_K(\si_Kt)^{n-1}t^2dt \ge\int_0^1 \psi_K(\si_Kt)^{n-1}t^2dt \ge\int_0^1 \Big[1-\frac1n\Big]^{n-1}t^2dt \to \frac1{3e} \quad\text{as }n\to\infty,
\end{equation*}
which completes the proof for $n\ge14$.\\

Now consider the case of $4\le n<14$. It follows from \eqref{eq:formula2} and \eqref{eq:formula1} that
\begin{align*}
\psi_{I\!K}(0)-\psi_{I\!K}(\de) &=\int_0^\infty \psi_K(t)^{n-1}dt - \int_0^\infty \psi_K(t)^{n-1}(1-\de^2t^2)^{\frac{n-4}2}dt \\
&=\int_{1/\de}^\infty \psi_K(t)^{n-1}dt + \int_0^{1/\de} \psi_K(t)^{n-1}\brS{1-(1-\de^2t^2)^{\frac{n-4}2}}dt \\
&= \text{\quad(I)\qquad\quad+\qquad\quad(II)}. 
\end{align*}
For (I), use the inequalities \eqref{eq:easy_bounds_psi_K} from Lemma \ref{lem:easy_bounds_psi_K}. Then $$\text{(I)}\le\int_{1/\de}^\infty\frac1{t^{n-1}}dt=\frac{\de^{n-2}}{n-2}=O(\de^{n-2})$$ and $$\text{(I)}\ge\int_{1/\de}^\infty\frac1{(t+1)^{n-1}}dt=\frac{(1+1/\de)^{2-n}}{n-2}=O(\de^{n-2}).$$ Since $\de$ is comparable to $\eps$ by \eqref{eq:theta}, we have that (I) is $O(\eps^{n-2})$, which is at most $O(\eps^3)$ if $n\ge5$.

To get an upper bound of (II), use \eqref{eq:easy_bounds_psi_K} again.
\begin{align*}
\text{(II)} &\le \int_0^1 \brS{1-(1-\de^2t^2)^{\frac{n-4}2}}dt + \int_1^{1/\de} (1/t)^{n-1}\brS{1-(1-\de^2t^2)^{\frac{n-4}2}}dt \\
&= \text{\qquad\quad(II-1)\qquad\quad+\qquad\quad(II-2)},
\end{align*}
where $$\text{(II-1)}=\int_0^1 \brS{1-(1-\frac{n-4}2\de^2t^2)}dt + O(\de^4)=\frac{n-4}6\,\de^2 + O(\de^4)$$ and
\begin{align*}
\text{(II-2)} &=\int_1^{1/\de} (1/t)^{n-1}dt - \int_1^{1/\de} \frac{(1-\de^2t^2)^{\frac{n-4}2}}{t^{n-4}}\,\frac{dt}{t^3}\\
&= \frac{1-\de^{n-2}}{n-2}-\frac12\int_{\de^2}^1(s-\de^2)^{\frac{n-4}2}ds = \frac{1-\de^{n-2}-(1-\de^2)^{\frac{n-2}2}}{n-2}\\
&=\frac12\de^2+O(\de^{n-2}).
\end{align*}
A lower bound of (II) is given by
\begin{align*}
\text{(II)} &\ge \int_0^{1/\de}\frac{1-(1-\de^2t^2)^{\frac{n-4}2}}{(t+1)^{n-1}}dt \\
&= \int_0^1\frac{1-(1-\de^2t^2)^{\frac{n-4}2}}{(t+1)^{n-1}}dt + \int_0^{1/\de}\brR{\frac{t}{t+1}}^{n-1}\frac{1-(1-\de^2t^2)^{\frac{n-4}2}}{t^{n-1}}dt \\
&\ge \frac1{2^{n-1}}\int_0^1\brS{1-(1-\de^2t^2)^{\frac{n-4}2}}dt + \frac1{2^{n-1}}\int_0^{1/\de}\frac{1-(1-\de^2t^2)^{\frac{n-4}2}}{t^{n-1}}dt \\
&= \frac{n-4}{3\cdot 2^n}\de^2 +\frac1{2^{n-1}}\text{(II-2)} = \brR{\frac{n-4+3\cdot2^{n-1}}{3\cdot 2^n}}\de^2 +O(\de^{n-2}).
\end{align*}
In summary, if $n\ge5$, then (I) is at most $O(\eps^3)$ and (II) is asymptotically $c\de^2+O(\de^3)$. In addition, if $n=4$, then (II) disappears and (I) is $c\de^2+O(\de^3)$. Note that $\de^e_{I\!K}(\eps) = \frac{\psi_{I\!K}(0)-\psi_{I\!K}(\de)}{\psi_{I\!K}(0)}$ and $c_1<\psi_{I\!K}(0)<c_2$ by Lemma \ref{lem:approx}. Finally, we get $$c_1'<\de^e_{I\!K}(\eps)/\eps^2<c_2',$$ where $c_1'$, $c_2'$ are positive absolute constants.
\end{proof}

\begin{remark}
In general, Theorem \ref{thm:uniform_convexity_IK} is not true in dimension $3$. For example, the intersection body of the double cone $B_1\subset\R^3$ does not have equatorial power type $2$.
\end{remark}
\begin{proof}
It follows from Lemma \ref{lem:easy_bounds_psi_K} that the function $\psi_K$ for the double cone $K=B_1$ is given by $\psi_{B_1}(x)=\frac1{x+1}$. Let $\eps=\brR{\rd{I\!B_1}(\te)/\rd{I\!B_1}(0)}\cos\te$ for some angle $\te$ and let $\de=\cot\te$. Then
\begin{align*}
\psi_{I\!B_1}(0)&=\int_0^\infty \psi_{B_1}(t)^2dt=\int_0^\infty (t+1)^{-2}dt=1\\
\psi_{I\!B_1}(\de) &=\int_0^{1/\de} \psi_{B_1}(t)^2(1-\de^2t^2)^{-1/2}dt=\int_0^{1/\de}\frac{dt}{(t+1)^2\sqrt{1-\de^2t^2}}.
\end{align*}
So, 
\begin{align*}
\de^e_{I\!B_1}(\eps)&= 1-\psi_{I\!B_1}(\de)=\int_0^\infty \frac1{(t+1)^2}dt-\int_0^{1/\de}\frac{dt}{(t+1)^2\sqrt{1-\de^2t^2}}\\
&=\int_{1/\de}^\infty \frac1{(t+1)^2}dt+\int_0^{1/\de}\frac1{(t+1)^2}\brR{1-\frac1{\sqrt{1-\de^2t^2}}}dt\\
&=\brR{\de-\frac{\de^2}{1+\de}}-\de\int_0^1\frac{t^2dt}{(t+\de)^2\sqrt{1-t^2}(1+\sqrt{1-t^2})} \\
&=\de -\de f(\de) + O(\de^2),
\end{align*}
where $$f(\de)=\int_0^1\frac{t^2dt}{(t+\de)^2\sqrt{1-t^2}(1+\sqrt{1-t^2})}.$$ Note that 
\[f(0)=\int_0^1\frac{dt}{\sqrt{1-t^2}(1+\sqrt{1-t^2})}=1\] and \[\lim_{\de\rightarrow0}\frac{f(0)-f(\de)}{\de}=\int_0^1\frac{dt}{t\sqrt{1-t^2}(1+\sqrt{1-t^2})}=\infty,\]
Since $\de$ is comparable to $\eps$, we conclude that $\de^e_{I\!B_1}(\eps)=o(\eps)$, but $\de^e_{I\!B_1}(\eps)\neq O(\eps^2)$.
\end{proof}

\begin{remark}
The convexity condition of $K$ in Theorem \ref{thm:uniform_convexity_IK} is crucial to get the uniform boundedness of the constant $c_K$. 
For $t>0$, consider the star body of revolution $K_t$, defined as the union $K_t=L_t\cup B_\infty$ of two cylinders $$L_t=\set{(x,y)\in\R^n=\R\times\R^{n-1}:|x|\le e^{-1/t}, |y|\le1/t}$$ and $$B_\infty=\set{(x,y)\in\R^n=\R\times\R^{n-1}:|x|\le1, |y|\le1}.$$  
If $t>0$ is small enough, then the intersection body of $K_t$ is almost the same as that of $B_\infty$ around the equator. In other words, $\psi_{I\!K_t}(0)=\psi_{I\!B_\infty}(0)+O(e^{-1/t})$ and $\psi_{I\!K_t}(\eps)=\psi_{I\!B_\infty}(\eps)+O(e^{-1/t})$ for small $\eps>0$. Nevertheless, note that $\rd{I\!B_\infty}(0)=1$, but $\rd{I\!K_\de}(0)=1/t^{n-1}$, i.e., they have quite different radial functions on the axis as $t$ approaches to zero. So, 
\begin{align*}
\de^e_{I\!K_t}(\eps) &= {\psi_{I\!K_t}(0)-\psi_{I\!K_t}(\de) \over \psi_{I\!K_t}(0)}={\psi_{I\!B_\infty}(0)-\psi_{I\!B_\infty}(\de/t^{n-1}) \over \psi_{I\!B_\infty}(0)}+O(e^{-1/t})\\
&= \de^e_{I\!B_\infty}(\eps/t^{n-1})+O(e^{-1/t}),
\end{align*}
where $\de={\rd{I\!K_t}(0)\,\eps\over\rd{I\!K_t}(\pi\!/2)} ={\rd{I\!B_\infty}(0)\,\eps\over\rd{I\!B_\infty}(\pi\!/2)}\cdot\frac1{t^{n-1}}+ O(e^{-1/t})$.
Thus, $\de^e_{I\!K_t}(\eps)/\de^e_{I\!B_\infty}(\eps)=O(t^{2-2n})$, which tends to infinity as $t$ tends to zero.  Therefore, this example shows that the constant $c_K$ in Theorem \ref{thm:uniform_convexity_IK} may be unbounded in case of star bodies.
\end{remark}

\section{Double intersection bodies of revolution in high dimension}

Recently, Fish, Nazarov, Ryabogin, and Zvavitch \cite{FNRZ} proved that the iterations of the intersection body operator, applied to any symmetric star body sufficiently close to a Euclidean ball $B_2^n$ in the Banach-Mazur distance, converge to $B_2^n$ in the Banach-Mazur distance. Namely, if $K$ is a star body in $\R^n$ with $d_{B\!M}(K,B_2^n)=1+\eps$ for small $\eps>0$, then $$\lim_{m\to\infty}d_{B\!M}(I^mK,B_2^n)=1.$$
In case of bodies of revolution in sufficiently high dimension, it turns out that it is enough to apply the intersection body operator {\em twice} to get close to the Euclidean ball in the Banach-Mazur distance, which will be shown in this section. The uniform boundedness of the constant $c_K$ by absolute constants in Theorem \ref{thm:uniform_convexity_IK} plays an important role in the following result.

\begin{theorem}\label{thm:double_intersection_body}
Let $K$ be a symmetric convex body of revolution in $\R^n$. Then the double intersection body $I^2K$ is close to an ellipsoid if the dimension $n$ is large enough. More precisely, for every $\eps>0$ there exists an integer $N>0$ such that for every $n\ge N$ and any body $K\subset\R^n$ of revolution, $$d_{B\!M}(I^2K,B_2^n)\le1+\eps.$$ 
\end{theorem}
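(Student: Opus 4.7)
The plan is to leverage Theorem \ref{thm:uniform_convexity_IK}, which provides uniform quadratic control of $\psi_{I\!K}$ near the equator with absolute constants, together with a Laplace-type analysis of formula \eqref{eq:formula1} in high dimension, to show that $\psi_{I^2\!K}$ converges, after suitable rescaling, to the $\psi$ function of an ellipsoid whose two semi-axes have ratio bounded by absolute constants. Since any ellipsoid is linearly equivalent to a Euclidean ball, the Banach-Mazur bound then follows by applying a diagonal map along the axis of revolution.

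Normalize $K$ so that $\rd{K}(0)=\rd{K}(\pi\!/2)=1$. By Busemann's theorem $I\!K$ is a symmetric convex body of revolution, and Theorem \ref{thm:uniform_convexity_IK} combined with \eqref{eq:delta.psi_formula} yields a quadratic expansion
\[\psi_{I\!K}(u)/\psi_{I\!K}(0)=1-a_K u^2+O(u^3)\quad\text{near }u=0,\]
with $a_K$ lying in an absolute interval $[c_1,c_2]$. Setting the scale $\sigma=1/\sqrt{n a_K}$ and substituting $t=\sigma s$ into formula \eqref{eq:formula1} applied to $L=I\!K$, the two factors of the integrand become $[\psi_{I\!K}(\sigma s)/\psi_{I\!K}(0)]^{n-1}$ and $(1-\sigma^2 x^2 s^2)^{(n-4)/2}$, which by the quadratic expansion and the identity $(1-c/n)^n\to e^{-c}$ converge pointwise to $e^{-s^2}$ and $e^{-x^2 s^2/(2a_K)}$ respectively. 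Integrating and normalizing gives
\[\psi_{I^2\!K}(x)/\psi_{I^2\!K}(0)\longrightarrow 1/\sqrt{1+x^2/(2a_K)},\]
which is precisely the normalized $\psi$ function of the ellipsoid of revolution $E_K$ with semi-axes $\sqrt{2a_K}$ on the $e_1$-axis and $1$ on $e_1^\perp$.

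To promote this pointwise heuristic to uniform control in $K$, I would (i) truncate the integral at $s=R$ large and bound the tail via Lemma \ref{lem:convexity} (which provides a monotone majorant for $\psi_{I\!K}(\sigma s)$ valid for all $s\ge 1$) and Lemma \ref{lem:approx} applied to $I\!K$, obtaining a remainder of size $O((1+R/n)^{2-n})$; (ii) extract from the proof of Theorem \ref{thm:uniform_convexity_IK} a uniform version of the quadratic expansion of $\psi_{I\!K}$ on the shrinking window $[0,R\sigma]$, with error $O(R^3/n^{3/2})$; and (iii) combine (i)--(ii) via dominated convergence to conclude uniform convergence of the normalized $\psi_{I^2\!K}$ to $\psi_{E_K}$ on compact subsets of $[0,\infty)$. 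For bodies of revolution, such convergence of normalized $\psi$ functions is equivalent to convergence of the radial functions up to a scaling on the $e_1$-axis, which yields convergence in Banach-Mazur distance. Since $a_K\in[c_1,c_2]$ with absolute bounds, the diagonal map $T=\operatorname{diag}(1/\sqrt{2a_K},1,\ldots,1)$ sends $E_K$ to $B_2^n$, so $T(I^2\!K)$ is within $1+\eps$ of $B_2^n$ in Banach-Mazur distance for all $n$ large enough, uniformly in $K$.

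The main obstacle is step (ii): the stated form of Theorem \ref{thm:uniform_convexity_IK} only gives a pointwise quadratic expansion at the equator, so a careful re-examination of its proof is needed to obtain uniform error bounds throughout the window $[0,R\sigma]=[0,R/\sqrt{na_K}]$, with dependence only on $R$, $n$, and the absolute constants $c_1,c_2$. Once this uniform expansion is in hand, the Gaussian approximation and the passage to the Banach-Mazur distance are routine.
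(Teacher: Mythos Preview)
Your outline is essentially the paper's proof: both exploit Theorem~\ref{thm:uniform_convexity_IK} to get a uniform quadratic expansion of $\psi_{I\!K}$ near the equator, truncate the integral in \eqref{eq:formula1} via Lemmas~\ref{lem:convexity}--\ref{lem:formula_R}, and carry out a Laplace/Gaussian comparison to an ellipsoid. The one simplifying device the paper uses that you might adopt is to apply the diagonal map $T$ to $I\!K$ \emph{before} taking the second intersection body, so that $\psi_{T(I\!K)}$ and $\psi_{B}$ agree to second order and one compares $I(T(I\!K))$ directly to $I\!B$; this absorbs your step (ii) into a single estimate $(\psi_L/\psi_B)^{n-1}=1+O(n\sigma_L^3R^3)$, whose uniformity in $K$ is indeed implicit in the proof of Theorem~\ref{thm:uniform_convexity_IK}.
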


\begin{proof}
By $B$ we denote the unit ball in $\R^n$ (instead of $B_2^n$).
It follows from Theorem~\ref{thm:uniform_convexity_IK} that $\de^e_{I\!K}(\eps) = c_K\eps^2 + O(\eps^3)$ where $c_1<c_K<c_2$ for absolute constants $c_1,c_2>0$. Also note $\de^e_{B}(\eps)=\eps^2/2+O(\eps^3)$ for the unit ball $B$. Consider a linear transformation $T$ (dilation) which gives $\rd{T(I\!K)}(\pi\!/2)=1$ and $\rd{T(I\!K)}(0)=1/\sqrt{2c_K}$. 
Denote $L:=T(I\!K)$. Then, 
\begin{equation}\label{eq:psi_L}
\psi_L(t)=1-\de^e_L\brR{t/\sqrt{2c_K}}+O(t^3)=1-t^2/2 +O(t^3).
\end{equation}
Also, it is not hard to compute the function $\psi_{B}$ for the ball $B$,
\begin{equation}\label{eq:psi_Ball}
\psi_{B}(t)=\frac1{\sqrt{1+t^2}}=1-t^2/2 +O(t^3).
\end{equation}
Let 
\begin{align*}
\si_L&=\psi_L^{-1}(1-1/n)=\sqrt{2/n}+o(n^{-1/2})\\
\si_B&=\psi_{B}^{-1}(1-1/n)=\sqrt{2/n}+o(n^{-1/2}).
\end{align*}
Fix $\eps>0$, and let $R=-4\log\eps$, $N=R^2/\eps^4$. Then, we claim that for every $n\ge N$,
\begin{equation}\label{eq:claim_double_intersection}
\abs{\frac{\rd{I\!L}(\te)}{\rd{I\!B}(\te)}-1}\le\eps \quad\forall\te\in[0,\pi\!/2].
\end{equation}
Note that $\rd{I\!L}(0)=\rd{I\!B}(0)\approx\sqrt{\pi\over2n}$ for large $n$ by \eqref{eq:gardner_IK_axis}. In case that $\te$ is close to $0$, the claim \eqref{eq:claim_double_intersection} can be obtained from the convexity of $I\!L$. Thus it suffices to consider all angles $\te$ with $$\tan\te\ge\eps.$$
For $n\ge N$, since $(1+R/n)^{2-n}\le\brR{e^{\frac{R}{2n}}}^{2-n}\le e^{1-R/2}=e\eps^2$, we have 
\begin{equation}\label{eq:remainder_eps}
\brS{1+\frac{R}{n}}^{2-n}=O(\eps^2).
\end{equation}
Note also that for $n\ge N$, since $\si_L$, $\si_B$ are bounded by $\sqrt{2/N}=\sqrt{2}\eps^2/R$,
\begin{equation}\label{eq:sigma_tau_R}
\si_L R=O(\eps^2) \quad\text{and}\quad \si_B R=O(\eps^2).
\end{equation}
Lemma \ref{lem:formula_R} and \eqref{eq:remainder_eps} give
\begin{equation*}
\frac{\rd{I\!L}(\te)\sin\te}{\si_L} = \int_0^R \psi_L(\si_Lt)^{n-1}\brS{1-\frac{\si_L^2t^2}{\tan^2\te}}^{\frac{n-4}2}dt  + O(\eps^2).
\end{equation*}
Note that $\rd{I\!L}(\pi\!/2)$ is comparable to $\si_L$ by Lemma \ref{lem:approx}, and $\rd{I\!L}(0)\approx\sqrt{\pi\over2n}$ is also comparable to $\si_L$. So, by convexity of $I\!L$, the radial function for $I\!K$ at any angle is comparable to $\si_L$. Moreove, since 
\begin{align*}
\si_L\eps^2&=\frac{\si_L}{\rd{I\!L}(\te)}\cdot\frac{\eps^2}{\sin\te}\cdot\rd{I\!L}(\te)\sin\te \le \frac{\si_L}{\rd{I\!L}(\te)}\cdot2\eps\cdot\rd{I\!L}(\te)\sin\te\\
&=O(\eps)\cdot\rd{I\!L}(\te)\sin\te,
\end{align*}
we have
\begin{align*}
\rd{I\!L}(\te)\sin\te &= \int_0^{\si_LR} \psi_L(t)^{n-1}\brR{1-t^2/\tan^2\te}^{\frac{n-4}2}dt  + O(\si_L\eps^2)\\
&= (1+O(\eps))\int_0^{\si_LR} \psi_L(t)^{n-1}\brR{1-t^2/\tan^2\te}^{\frac{n-4}2}dt.
\end{align*}
Similarly, we have the same equality for $I\!B$. Without loss of generality, we may assume $\si_L\ge\si_B$. Then,
\begin{align}
\rd{I\!L}(\te)\sin\te &= (1+O(\eps))\int_0^{\si_LR} \psi_L(t)^{n-1}\brR{1-t^2/\tan^2\te}^{\frac{n-4}2}dt\\
\rd{I\!B}(\te)\sin\te &= (1+O(\eps))\int_0^{\si_LR} \psi_{B}(t)^{n-1}\brR{1-t^2/\tan^2\te}^{\frac{n-4}2}dt
\end{align}
Moreover, \eqref{eq:psi_L} and \eqref{eq:psi_Ball} give
$$\brR{\psi_L(t)\over\psi_{B}(t)}^{n-1}=\brS{1+O(\si_L^3R^3)}^{n-1}=1+O(n\si_L^3R^3).$$
Here, since $n\si_L^2\le3$, $\eps R^2=16\eps(\log\eps)^2\le1$, and $\si_LR=O(\eps^2)$, we get
$$n\si_L^3R^3=(n\si_L^2)(\eps R^2)(\si_LR/\eps)=O(\eps).$$
Finally, we have 
\begin{equation*}
{\rd{I\!L}(\te)\over\rd{I\!B}(\te)} =1+O(\eps)\quad\text{for each angle } \te,
\end{equation*}
which completes the proof.
\end{proof}

\begin{remark}
Theorem \ref{thm:double_intersection_body} says that the double intersection body of any body of revolution becomes close to an ellipsoid as the dimension increases to the infinity. However, it is not true for the single intersection body, in general. For example, consider the cylinder $B_\infty$.
Then the Banach-Mazur distance between $I\!B_\infty$ and $B_2^n$ does not converge to 1 as $n$ tends to the infinity.
\end{remark}
\begin{proof}
The function $\psi_{B_\infty}$ for the cylinder $B_\infty$ is given by $\psi_{B_\infty}(t)=\min(1,1/t)$, as in the proof of Lemma \ref{lem:easy_bounds_psi_K}. Note that $\rd{I\!B_\infty}(0)=\sqrt{\pi/(2n)}$ by \eqref{eq:gardner_IK_axis}, and
\begin{equation}
\rd{I\!B_\infty}(\pi\!/2)=\int_0^\infty \psi_{B_\infty}(t)^{n-1}=\int_0^1 1dt+\int_1^\infty t^{1-n}dt=\frac{n-1}{n-2}.
\end{equation}
Choose the angle $\te$ with $\tan\te=\frac{\rd{I\!B_\infty}(\pi\!/2)}{\rd{I\!B_\infty}(0)}$, and let $x=\cot\te$. Then 
\begin{equation}
x=\frac{\rd{I\!B_\infty}(0)}{\rd{I\!B_\infty}(\pi\!/2)}=\sqrt{\frac\pi{2n}}\cdot\frac{n-2}{n-1}=O(1/\sqrt{n}).
\end{equation}
In addition,
\begin{equation*}
\rd{I\!B_\infty}(\te)\sin\te =\psi_{I\!B_\infty}(x)=\int_0^{1/x}\psi_{B_\infty}(t)^{n-1}(1-x^2t^2)^{\frac{n-4}2}dt= {\rm (I)} + {\rm (II)},
\end{equation*}
where  $${\rm (I)}=\int_0^1(1-x^2t^2)^{\frac{n-4}2}dt\quad\text{and}\quad{\rm (II)}=\int_1^{1/x}t^{1-n}(1-x^2t^2)^{\frac{n-4}2}dt.$$
For the first term, note that
\begin{align*}
\lim_{n\rightarrow\infty}{\rm (I)} &= \lim_{n\rightarrow\infty}\int_0^1\brR{1-\frac{\pi}{2}\frac{(n-2)^2}{(n-1)^2}\cdot\frac{t^2}{n}}^{\frac{n-4}2}dt \\
&=\int_0^1 e^{-\frac{\pi}{4}t^2}dt \ge \int_0^1(1-\pi t^2/4)dt=1-\pi/12.
\end{align*}
The second term
\begin{align*}
{\rm (II)} &= \int_1^{1/x}t^{1-n}(1-x^2t^2)^{\frac{n-4}2}dt= \int_1^{1/x}\frac1{t^3}\brR{\frac1{t^2}-x^2}^{\frac{n-4}2}dt \\
&= \frac12\int_0^{1-x^2}s^{\frac{n-4}2}ds = \frac1{n-2}(1-x^2)^{\frac{n-4}2} =  \frac1{n-2}\brR{1-\frac{\pi}{2}\frac{(n-2)^2}{(n-1)^2}\cdot\frac1n}^{\frac{n-4}2}
\end{align*}
converges to zero as $n$ tends to infinity. 
Let $L$ be the body of revolution obtained by shrinking $I\!B_\infty$ by $\rd{I\!B_\infty}(0)$ on $\spn\set{e_1}$ and by $\rd{I\!B_\infty}(\pi\!/2)$ on $e_1^\perp$. That is, $\rd{L}(0)=\rd{L}(\pi\!/2)=1$. Then
\begin{align*}
\rd{L}(\pi/4)\sin(\pi/4) &=\frac{\rd{I\!B_\infty}(\te)\sin\te}{\rd{I\!B_\infty}(\pi\!/2)},
\end{align*}
and it limit as $n\rightarrow\infty$ is given by $$\lim_{n\rightarrow\infty}\frac{\rm (I) + (II)}{(n-1)/(n-2)}\ge 1-\pi/12.$$ Thus, we get $\rd{L}(\pi/4)\ge\sqrt{2}(1-\pi/12)>1$ for large $n$. Note that  $L$ is a symmetric body  of revolution about the axis $e_1$ satisfying $\rd{L}(0)=\rd{L}(\pi\!/2)=1$ and $\rd{L}(\pi/4)=c>1$, which implies that $d_{B\!M}(L,B_2^n)\ge c>1$ for large $n$. Therefore $$\lim_{n\rightarrow\infty}d_{B\!M}(I\!B_\infty,B_2^n)=\lim_{n\rightarrow\infty}d_{B\!M}(L,B_2^n)\ge c>1.$$
\end{proof}

\hide{
\section{Iterations of Intersection Body Operator}
\begin{theorem}[FNRZ]
For every integer $n\ge3$, there exists $\eps>0$ such that for any star body $K$ in $\R^n$ with $d_{BM}(K,B_2^n)\le\eps$ $$d_{BM}(I^mK,B_2^n)\rightarrow1 \quad\text{as}\quad m\rightarrow\infty.$$
\end{theorem}
In order to induce the convergence of $I^mK$ for bodies of revolution $K$, we may ask\\
\noindent{\bf Question} In a class of revolution bodies of any dimensions, is it possible to take a threshold $\eps$ of convergence in [FNRZ] which is INDEPENDENT of dimension $n$?\\
If this is true, then we can conclude that the iteration of the intersection body operators on bodies of revolution converges to an ellipsoid in high dimension.
}

\section{Local convexity of intersection bodies of star bodies of revolution.}

 Let $K$ be a symmetric star body of revolution in  $\mathbb{R}^n$. Following the definition in \cite[Section 0.7]{Ga2}, the radial function $\rho_K$  is continuous, but it may attain the value zero, and hence the origin need not be an interior point of $K$. In this section we will study the local convexity at the equator of $\rho_{I\!K}$. The main result is an analogue of  Theorem \ref{thm:uniform_convexity_IK}: In dimensions five and higher, the intersection body of a star body is locally convex with equatorial power type 2. As observed in Remark 2, in the convex case the constant $c_K$ is uniformly bounded, but if $K$ is not convex $c_K$ may be arbitrarily big or close to zero by choosing $K$ appropriately. As for the four-dimensional case, $I\!K$ is still locally convex at the equator, but it may not be strictly convex (Example \ref{cyl} below), or if it is, its modulus of convexity may not be of power type 2 (Example \ref{mod4} below). However, if the origin is an interior point of $K$, then $I\!K$ has equatorial power type 2 (Theorem \ref{int0} below).

If $\rho_K$ is not identically zero, we have from \eqref{eq:gardner_IK} that $\rd{I\!K}(\pi\!/2)$ is a  positive number. Applying a dilation, we will assume that $\rd{I\!K}(\pi\!/2) =1$. Thus, to study the equatorial power type of $I\!K$, we can use the function $\psi_{I\!K}$ as in Section 2.


We start with characterization of local convexity at the equator in terms of the radial function.


\begin{lemma}
\label{convex-radial-func}
Let $K$ be a symmetric star body of revolution such that $\rd{K}\in C^2[0,\pi]$. Then $K$ is locally convex at the equator if and only if $\rho_K(\pi\!/2)-\rho_K''(\pi\!/2) \geq 0$.
\end{lemma}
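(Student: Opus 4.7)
The plan is to compare $\partial K$ near the equator with the tangent hyperplane at an equator point by means of a second-order Taylor expansion. The origin-symmetry of $K$ combined with the fact that $K$ is a body of revolution about the $e_1$-axis gives $\rho_K(\pi-\theta)=\rho_K(\theta)$, and hence $\rho_K'(\pi\!/2)=0$. Without loss of generality I examine local convexity at the specific equator point $p=\rho_K(\pi\!/2)\,e_2$. Because $\rho_K'(\pi\!/2)=0$, the outward unit normal to $\partial K$ at $p$ is $e_2$, so the tangent hyperplane at $p$ is $H=\{y\in\R^n:\langle y,e_2\rangle=\rho_K(\pi\!/2)\}$, and $K$ is locally convex at $p$ exactly when $\langle y,e_2\rangle\le\rho_K(\pi\!/2)$ for every boundary point $y$ sufficiently close to $p$.

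Next I parametrize $\partial K$ near $p$: any nearby boundary point has the form $y=\rho_K(\theta)(\cos\theta,\sin\theta\,v)$ with $v\in S^{n-2}\subset e_1^\perp$, and writing $v=\cos\phi\,e_2+\sin\phi\,w$ for a unit $w\in e_1^\perp\cap e_2^\perp$ gives $\langle y,e_2\rangle=\rho_K(\theta)\sin\theta\cos\phi$. Using $\rho_K'(\pi\!/2)=0$ and the $C^2$ regularity, Taylor expansion around $(\theta,\phi)=(\pi\!/2,0)$ yields
\begin{equation*}
\rho_K(\pi\!/2)-\rho_K(\theta)\sin\theta\cos\phi=\tfrac12\bigl[\rho_K(\pi\!/2)-\rho_K''(\pi\!/2)\bigr](\theta-\pi\!/2)^{2}+\tfrac12\rho_K(\pi\!/2)\,\phi^{2}+o\bigl((\theta-\pi\!/2)^{2}+\phi^{2}\bigr).
\end{equation*}
The direction $w$ does not appear because $\langle y,e_2\rangle$ depends on $v$ only through $\cos\phi$, which accounts for all $(n-2)$ directions transverse to the meridian in the same way.

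Finally I read off the equivalence from this quadratic form. Since $\rho_K(\pi\!/2)>0$, the coefficient of $\phi^{2}$ is automatically strictly positive, so the Hessian is positive semi-definite if and only if $\rho_K(\pi\!/2)-\rho_K''(\pi\!/2)\ge 0$. When this inequality is strict, the Hessian is positive definite and the expansion gives $\langle y,e_2\rangle\le\rho_K(\pi\!/2)$ throughout a neighborhood of $p$, so $K$ is locally convex at $p$; when it fails, i.e.\ $\rho_K''(\pi\!/2)>\rho_K(\pi\!/2)$, setting $\phi=0$ and $\theta-\pi\!/2$ small but non-zero produces boundary points of $K$ with $\langle y,e_2\rangle>\rho_K(\pi\!/2)$, precluding local convexity. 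The subtle point I expect is the borderline equality $\rho_K''(\pi\!/2)=\rho_K(\pi\!/2)$, where the second-order term in the $\theta$-direction vanishes and local convexity must be decided by higher-order data; this boundary case can be absorbed by a perturbation/continuity argument consistent with the closed-inequality form of the lemma.
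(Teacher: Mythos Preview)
Your approach is essentially the paper's: both use $\rho_K'(\pi/2)=0$ (from the symmetry) and compute the second-order behavior of the boundary at the equator, identifying the decisive coefficient as $\rho_K(\pi/2)-\rho_K''(\pi/2)$. The only cosmetic difference is that the paper works in the two-dimensional meridian section and evaluates $d^2y/dx^2$ for the parametric curve $(\rho_K(\theta)\cos\theta,\rho_K(\theta)\sin\theta)$ at $\theta=\pi/2$, obtaining $-(\rho_K(\pi/2)-\rho_K''(\pi/2))/\rho_K(\pi/2)^2$, whereas you Taylor-expand the signed distance to the tangent hyperplane directly in $\R^n$; your extra $\phi$-variable is harmless but redundant, since for a body of revolution local convexity at an equator point reduces to the planar section. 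Both arguments handle the strict cases correctly and are equally silent on (or hand-wave through) the borderline equality $\rho_K''(\pi/2)=\rho_K(\pi/2)$---your proposed ``perturbation/continuity'' fix does not actually settle it, but the paper does not settle it either.
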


\begin{proof}
We express the boundary of $K$ parametrically by $\langle x(\theta),y(\theta) \rangle$, where $x(\theta)=\rho_K(\theta)\cos(\theta)$ and $y(\theta)=\rho(\theta)\sin(\theta)$. With this representation,the equator corresponds to the point $(0,\rho_K(\pi\!/2))$. Since the boundary of $K$ is of the class $C^2$, we can study the local convexity at the equator by means of the second derivative 
\[
      \frac{d^2y}{dx^2}=\frac{\frac{d}{d\theta} \left( \frac{dy}{d\theta}/ \frac{dx}{d\theta}\right)}{\frac{dx}{d\theta}}=
    - \frac{\rho_K(\theta)^2+2(\rho_K'(\theta))^2-\rho_K(\theta)\rho''(\theta)}{\left(\sin(\theta) \rho_K(\theta)- \cos(\theta) \rho_K'(\theta) \right)^3}.
\]
At the equator, $\theta=\pi\!/2$. Also, it follows from the central and axial symmetries that  $\rho_K'(\pi\!/2)=0$.  Therefore, the above expression  simplifies to 
\[
        \frac{d^2y}{dx^2}=-\frac{ \left( \rho_K(\pi\!/2)-\rho_K''(\pi\!/2) \right)}{(\rho_K(\pi\!/2))^2}
\]
Hence $K$ is locally convex at the equator if and only if $ \rho_K(\pi\!/2)-\rho_K''(\pi\!/2) \geq 0$.  
\end{proof}

\begin{lemma}
\label{regularity}
Let $K$ be a symmetric star body of revolution in $\mathbb{R}^n$, $n\geq 5$, with radial function $\rho_K$. Let $I\!K$ be the intersection body of $K$. Then $\rho_{I\!K}\in C^2(0,\pi)$. 
\end{lemma}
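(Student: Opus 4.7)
The plan is to convert the regularity claim into a statement about a one-dimensional integral transform of $\rho_K$, and then use integration by parts to transfer the required two derivatives from the merely continuous $\rho_K$ onto the smooth weight kernel.

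First, applying the change of variable $t = \cos\varphi/\sin\theta$ (already used in the paper to derive \eqref{eq:formula1}) to Gardner's formula \eqref{eq:gardner_IK}, I would write
$$\rho_{I\!K}(\theta) = F(\sin\theta), \qquad F(\sigma) := \int_0^1 g(\sigma t)\,(1-t^2)^{(n-4)/2}\,dt,$$
where $g(u) := \rho_K(\arccos u)^{n-1}$ is continuous on $[0,1]$. Since $\theta \mapsto \sin\theta$ is a $C^\infty$ diffeomorphism of each of the open intervals $(0,\pi/2)$ and $(\pi/2,\pi)$ onto $(0,1)$, and since central symmetry of $K$ gives $\rho_{I\!K}(\theta) = \rho_{I\!K}(\pi-\theta)$, the problem reduces to proving $F \in C^2((0,1])$; matching of left and right derivatives at $\theta = \pi/2$ is then provided by this reflection symmetry together with the chain rule.

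Second, after substituting $u = \sigma t$ in the definition of $F$, I would integrate by parts once against the $C^1$ antiderivative $G(u) := \int_0^u g(v)\,dv$. The boundary term at $u = \sigma$ vanishes whenever $(n-4)/2 > 0$, i.e.\ for $n \geq 5$, so
$$F(\sigma) = (n-4)\,\sigma^{-(n-3)}\int_0^\sigma G(u)\,u\,(\sigma^2-u^2)^{(n-6)/2}\,du.$$
A further substitution $u = \sigma \sin\beta$ produces the particularly clean representation
$$F(\sigma) = \frac{n-4}{\sigma}\int_0^{\pi/2} G(\sigma\sin\beta)\,\sin\beta\,\cos^{n-5}\beta\,d\beta,$$
in which the $\beta$-weight is $C^\infty$ and $G$ is $C^1$. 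Differentiating once under the integral sign is then justified and yields $F'$ as a continuous combination of analogous integrals, giving $F \in C^1$.

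Third, to upgrade to $C^2$ I would differentiate once more. The formal derivative produces a factor $g(\sigma\sin\beta) = G'(\sigma\sin\beta)$ which is only continuous; to deal with this I would integrate by parts a further time in $\beta$, writing $g(\sigma\sin\beta)\,\sigma\cos\beta\,d\beta = dG(\sigma\sin\beta)$ and arranging the complementary factor so that both boundary contributions vanish (using $G(0) = 0$ at $\beta = 0$ and the zero of $\sin^k\beta$ or $\cos^k\beta$ at the other endpoint). This expresses $F''$ as an absolutely convergent integral of $G$ against a continuous kernel, hence as a continuous function of $\sigma$ on $(0,1]$. The main obstacle is the critical dimension $n = 5$, where the $\beta$-weight $\cos^{n-5}\beta = 1$ provides no vanishing factor at $\beta = \pi/2$; one then has to resort to algebraic identities (such as $\sin^2\beta = 1 - \cos^2\beta$) that relate the offending integral back to $F$ itself and to the Abel-type integral $\int_0^{\pi/2} g(\sigma\sin\beta)\,d\beta$, closing the argument. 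Once $F \in C^2((0,1])$ is established, the chain rule with $\sigma = \sin\theta$ together with the reflection symmetry at $\theta = \pi/2$ give $\rho_{I\!K} \in C^2(0,\pi)$.
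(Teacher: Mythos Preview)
Your route is genuinely different from the paper's. The paper splits by parity (citing \cite{A} for even $n$) and, for odd $n$, treats the equatorial point $\theta=\pi/2$ via $\psi_{I\!K}$ separately from interior angles, where it differentiates the representation \eqref{ibsbxt} directly $(n-3)/2$ times and then analyzes the resulting singular Abel-type term. Your single integration-by-parts identity
\[
F(\sigma)=\frac{n-4}{\sigma}\int_0^{\pi/2}G(\sigma\sin\beta)\,\sin\beta\,\cos^{n-5}\beta\,d\beta
\]
is more economical: it handles all parities at once and needs no separate treatment of $\theta=\pi/2$. For $n\ge 6$ your scheme goes through as written --- the second integration by parts in $\beta$ produces either vanishing boundary terms ($n\ge7$) or the harmless $C^1$ boundary contribution $G(\sigma)$ ($n=6$), and one lands on an integral of the $C^1$ function $G$ against a bounded kernel.

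The gap is at $n=5$. Your algebraic manipulation does exactly what you describe: using $\sin^2\beta=1-\cos^2\beta$ together with one integration by parts yields
\[
\sigma F'(\sigma)+2F(\sigma)\;=\;\int_0^{\pi/2}g(\sigma\sin\beta)\,d\beta\;=\;\int_0^\sigma\frac{g(u)}{\sqrt{\sigma^2-u^2}}\,du\;=:\;H(\sigma).
\]
But this does \emph{not} close the argument; it relocates it. To conclude $F\in C^2$ from this relation you need $H\in C^1$, and $H$ is precisely the Abel integral whose regularity for a merely continuous $g$ is the whole difficulty --- you cannot extract $H'$ from the identity above without already knowing $F''$, so as written the reasoning is circular. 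The paper, by contrast, confronts this same Abel integral head-on (its function $G(x)=\int_0^{\pi/2}r(x\sin u)\,du$) and argues via one-sided difference quotients that the derivative exists and is continuous. Whatever one thinks of the level of detail there, your proposal currently contains no counterpart to that step; you must either supply an independent argument that $H\in C^1((0,1])$, or find a different device for $n=5$.
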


\begin{proof}
The part of this lemma corresponding to even values of $n$ was proven in \cite{A}, Proposition 8, where it was shown that $\rho_{I\!K}$ has continuous $(n-2)/2$-th derivative at every $\te \in (0,\pi/2)$, and a continuous $(n-2)$-th derivative at the point $\te=\pi/2$. We will thus assume that $n$ is odd, $n \geq 5$.

At the point $\te=\pi/2$, we will use definition  (\ref{psi}) and prove that $\psi_{I\!K}$ has a continuous second derivative at $x=0$. If  $n \geq 7$, differentiating with respect to $x$ twice in equation (\ref{eq:formula1}) gives
\[
    \psi_{I\!K}''(x)= -(n-4) \int_0^{1/x} \psi_K(t)^{n-1}\, t^2 \brS{1-x^2t^2}^{\frac{n-6}2}dt
\]
\[
 +(n-4)(n-6)x^2 \int_0^{1/x} \psi_K(t)^{n-1}\, t^4 \brS{1-x^2t^2}^{\frac{n-8}2}dt.
\]
Since $n \geq 7$ and $\psi_K$ is a bounded function that satisfies $\psi_K(t)=O(1/t)$ as $t$ tends to infinity, the integrals are convergent at infinity and
\[
    \psi_{I\!K}''(0)= -(n-4) \int_0^\infty \psi_K(t)^{n-1}\, t^2 \,dt +(n-4)(n-6)x^2 \int_0^{\infty} \psi_K(t)^{n-1}\, t^4 \,dt <+\infty.
\]
Thus, $\psi_{I\!K}''(0)$ is finite, and since $\psi_{I\!K}(x)$ is extended evenly for negative values of $x$, $\psi_{I\!K}''$ is continuous at 0. As for $n=5$, the first derivative of $\psi_{I\!K}$ is 
\[
    \psi_{I\!K}'(x)= -x \int_0^{1/x} \psi_K(t)^4\, t^2 \brS{1-x^2t^2}^{-1/2}dt,
\]
and 
\[
     \frac{\psi_{I\!K}'(x)-\psi_{I\!K}'(0)}{x}= - \int_0^{1/x} \psi_K(t)^4\, t^2 \brS{1-x^2t^2}^{-1/2}dt,
\]
which, when $x$ approaches zero, tends to the convergent integral $\displaystyle -\int_0^\infty \psi_K(t)^4\, t^2 dt$. We have thus shown that $\psi_{I\!K}(x)$ (and $\rho_{I\!K}$) have continuous second derivative at zero for every $n \geq 5$.\\

Finally, we will show that $\rho_{I\!K}$ has continuous second derivative at every $\te \in (0,\pi/2)$.  Setting $x=\sin \te$, $t=\cos \phi$, $r(t)=\rho_K^{n-1}(\arccos(t))$ and $F(x)=\rho_{I\!K}(\arcsin \te)$ in  equation (\ref{eq:gardner_IK}), we obtain the expression
\begin{equation}
 \label{ibsbxt}
     F(x)=\frac{1}{x^{n-3}} \int_0^x r(t) (x^2-t^2)^{(n-4)/2} \, dt,
\end{equation}
for $x\in (0,1]$, and $F(0)=c_n\, r(0)$, where $c_n$ is a constant depending only on $n$. Consider a point $a\in (0,1)$ such that $r(x)$ is continuous but not differentiable at $a$. Differentiating equation (\ref{ibsbxt}) $(n-3)/2$ times, we obtain
\[
      F^{((n-3)/2)}(x)=\sum_{k=-1, k \, odd}^{n-4} p_k(x) \int_0^x r(t) \left( \sqrt{x^2-t^2} \right)^k \, dt,
\]
where each $p_k(x)$ is a rational function of the form $c(n,k)/x^{b(n,k)}$, for some constants $c,b$. Note that the denominator of $p_k(x)$ is nonzero if $x\in(0,1)$. Therefore, every term with positive $k$ is a continuous function at $a$. For the term corresponding to $k=-1$, we change variables by setting  $t=x\sin u$, so that $\displaystyle \int_0^x \frac{r(t) }{\sqrt{x^2-t^2}} \, dt$ becomes 
$\int_0^{\pi/2} r(x\sin u) \, du=:G(x)$. Then,
\begin{equation}
 \label{left}
   \lim_{x \rightarrow a-}  \frac{G(x)-G(a)}{x-a}= \int_0^{\pi/2}  \lim_{x \rightarrow a-} \left( \frac{r(x \sin u)-r(a \sin u)}{x-a} \right) \,du
\end{equation}
and if $x>a$, $\lim_{x \rightarrow a+}  \frac{G(x)-G(a)}{x-a}$
\[
=\lim_{x \rightarrow a+} \left( \frac{1}{x-a}\int_0^{\arcsin \left( \frac{a}{x} \right)} \left( r(x\sin u)-r(a \sin u) \right) \, du+ \frac{1}{x-a} \int_{\arcsin \left( \frac{a}{x} \right)}^{\pi/2}  \left( r(x\sin u)-r(a \sin u) \right) \, du \right).
\]
Note that the first tends to the right hand side of (\ref{left}), while the second term tends to zero because $r$ is continuous at $x=a$. However, for the second derivative, the corresponding term will tend to $r'_+(a)-r'_-(a)$, which is not zero, and thus $G$ does not have a continuous second derivative at $a$. We have, in fact, shown that $\rho_{I\!K}$ has $(n-1)/2$ continuous derivatives at any point $\te \in (0,\pi/2)$.
\end{proof}

\begin{remark}
 We wish to note that the local convexity and regularity properties of $I\!K$ at the axis of revolution are the same as those of $K$ at the equator. It is easily seen by calculating the intersection body of a double cone that, in general,  $\rho_{I\!K}(\te)$ is not differentiable at $\te=0$. The general argument is as follows. Setting  $t=x\sin u$  in (\ref{ibsbxt})  gives 
\[
         F(x)= \int_0^{\pi\!/2} r(x\sin u) ( \cos u )^{n-3} \,du
\]
for $x \in(0,1]$. At $x=0$,  $F(0)=r(0)  \left( \int_0^{\pi\!/2} ( \cos u )^{n-3} \,du \right)$.  
Then,
\begin{equation}
 \label{reg-0-right}
   \lim_{x \rightarrow 0+} \frac{F(x)-F(0)}{x}= \int_0^{\pi\!/2}  \left(  \lim_{x \rightarrow 0+}  \frac{ r(x\sin u)-r(0)}{x} \right) ( \cos u )^{n-3} \,du,
\end{equation}
and similarly for the left-hand side limit. If the function $\rho_K$  (and hence $r$) is differentiable at zero, then so is $F$. However, if the right and left hand side limits of $r$ take different values, then the same will be the case for $F$. Observe also that (\ref{reg-0-right}) implies that the local convexity of $r$ at the equator and $F$ at the axis present the same behavior. In particular, if the body $K$ is not locally convex at the equator, then $I\!K$ will not be locally convex at the axis of revolution. 
\end{remark}

Now we are ready to prove the result on local convexity of $I\!K$ at the equator.

\begin{theorem}
 \label{Starbodies}
   Let $K$ be a symmetric star body of revolution in $\R^n$, $n \geq 5$. Then its intersection body  $I\!K$ is strictly convex at the equator, with equatorial power type 2.
\end{theorem}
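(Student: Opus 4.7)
The plan is to reduce the theorem to establishing the single inequality $\psi_{IK}''(0)<0$. By Lemma \ref{regularity}, $\psi_{IK}$ is of class $C^2$ in a neighborhood of $x=0$ for every $n\geq 5$, and since $K$ is symmetric, $\psi_{IK}$ is even, so $\psi_{IK}'(0)=0$. Taylor's theorem therefore gives
\[
\psi_{IK}(x)-\psi_{IK}(0)=\tfrac{1}{2}\,\psi_{IK}''(0)\,x^2+o(x^2)\qquad (x\to 0).
\]
A short chain-rule computation using the substitution $x=\cot\theta$ in the definition of $\psi_{IK}$ shows $\psi_{IK}''(0)=\rho_{IK}''(\pi/2)-\rho_{IK}(\pi/2)$; thus Lemma \ref{convex-radial-func} tells us that the sign of $\psi_{IK}''(0)$ controls local convexity at the equator, and both strict convexity and equatorial power type $2$ will follow from $\psi_{IK}''(0)<0$.

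To compute $\psi_{IK}''(0)$ I would differentiate formula \eqref{eq:formula1} under the integral sign. For $n\geq 7$ the kernel $(1-x^2t^2)^{(n-4)/2}$ vanishes at the upper limit $t=1/x$, so two applications of Leibniz's rule yield
\[
\psi_{IK}''(0)=-(n-4)\int_0^\infty \psi_K(t)^{n-1}\,t^2\,dt.
\]
The cases $n=5$ and $n=6$ are handled exactly as in the proof of Lemma \ref{regularity}: for $n=6$ the kernel is the polynomial $1-x^2t^2$, so the computation is elementary, while for $n=5$ one uses $\psi_{IK}''(0)=\lim_{x\to 0^+}\psi_{IK}'(x)/x=-\int_0^\infty \psi_K(t)^4\, t^2\,dt$, with dominated convergence justifying the exchange of limit and integral. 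In each case the integral is strictly positive, because $\psi_K$ is continuous, nonnegative, and not identically zero, and it is finite because $\psi_K(t)=O(1/t)$ forces $\psi_K(t)^{n-1}t^2=O(t^{3-n})$, which is integrable precisely for $n\geq 5$. Hence $\psi_{IK}''(0)<0$.

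To close, the Taylor expansion gives $\psi_{IK}(x)<\psi_{IK}(0)$ for all small $x\neq 0$; this is equatorial power type $2$ with constant $c_K=-\tfrac{1}{2}\psi_{IK}''(0)>0$ (translating to the modulus of convexity via \eqref{eq:delta.psi_formula}), and by rotational symmetry it also forces every point of the equator to be a strictly extreme point of a small neighborhood in $IK$, yielding strict convexity at the equator.

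The main technical obstacle is the borderline case $n=5$, where the kernel $(1-x^2t^2)^{1/2}$ is only H\"older at $t=1/x$ and the dominant $\psi_K(t)^4 t^2=O(t^{-2})$ is only just integrable; the interchange of differentiation and integration cannot be justified by a direct Leibniz rule and requires a dominated-convergence argument. Conveniently this technicality has already been handled inside the proof of Lemma \ref{regularity}, so the main line of the argument only needs to invoke that lemma and extract the sign of $\psi_{IK}''(0)$.
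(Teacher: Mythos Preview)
Your proposal is correct and takes a genuinely more direct route than the paper. Both arguments begin by invoking Lemma~\ref{regularity} to get $\psi_{I\!K}\in C^2$ near $0$, and both reduce the conclusion to the sign of $\psi_{I\!K}''(0)=\rho_{I\!K}''(\pi/2)-\rho_{I\!K}(\pi/2)$, but they establish strict negativity differently. The paper first proves the \emph{global} inequality $\psi_{I\!K}(x)<\psi_{I\!K}(0)$ for all $x\neq 0$ via the elementary pointwise bound $\bigl(1-\cos^2\varphi/\sin^2\theta\bigr)<\bigl(1-\cos^2\varphi\bigr)$ inside the integral~\eqref{eq:gardner_IK}; this yields only the weak inequality $\psi_{I\!K}''(0)\le 0$, and to rule out equality the paper invokes Koldobsky's Second Derivative test, arguing that $\psi_{I\!K}''(0)=0$ would force $I\!K$ to fail to be an intersection body. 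You instead simply read off the explicit value
\[
\psi_{I\!K}''(0)=-(n-4)\int_0^\infty \psi_K(t)^{n-1}\,t^2\,dt
\]
(with the analogous formula for $n=5$) from the very computations already performed inside the proof of Lemma~\ref{regularity}, and note that the integral is strictly positive and, thanks to $\psi_K(t)=O(1/t)$, finite exactly when $n\ge 5$. Your route is shorter and entirely self-contained, avoiding the external input of Koldobsky's test; the paper's route has the side benefit of the global monotonicity statement and showcases a nice application of the Second Derivative test, but neither is needed for the theorem as stated. One small remark: the proof of Lemma~\ref{regularity} only writes out the odd-$n$ cases explicitly (referring to \cite{A} for even $n$), so for $n=6$ you should record the one-line computation yourself rather than cite the lemma's proof verbatim.
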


\begin{proof}

By Lemma \ref{regularity}, $\rho_{I\!K}(\te)$ has continuous second derivative for every $\te \in (0,\pi/2]$. Observe that $\left(1-\frac{\cos^2 \phi}{\sin^2 \te} \right)<(1-\cos^2 \phi)$ for every $\te<\pi\!/2$. Using this estimate in equation (\ref{eq:gardner_IK}), we obtain
\[
 \rd{I\!K}(\te)\sin\te = \int_{\pi\!/2-\te}^{\pi\!/2} \rd{K}(\phi)^{n-1}\brS{1-\frac{\cos^2\phi}{\sin^2\te}}^{\frac{n-4}{2}}\sin\phi \, d\phi
\]
\[
<  \int_{\pi\!/2-\te}^{\pi\!/2} \rd{K}(\phi)^{n-1}(1-\cos^2 \phi)^{\frac{n-4}{2}}\sin\phi  \, d\phi 
\]
\[
 \leq \int_{0}^{\pi\!/2} \rd{K}(\phi)^{n-1}(1-\cos^2 \phi)^{\frac{n-4}{2}}\sin\phi  \, d\phi  
  = \rd{I\!K}(\pi\!/2)=1.
\]
Therefore, $1-\psi_{I\!K}(\eps)=1-\ \rd{I\!K}(\te)\sin(\te)$ has a local minimum at $\eps=0$, and thus $-\psi_{I\!K}''(0)=\rho_{I\!K}(\pi\!/2)-\rho_{I\!K}''(\pi\!/2) \geq 0$. By Lemma \ref{convex-radial-func}, $I\!K$ is locally convex at the equator.\\

Assume that  $\rho_{I\!K}(\pi\!/2)-\rho_{I\!K}''(\pi\!/2)=0$. We claim that this contradicts the fact that $I\!K$ is an intersection body by using Koldobsky's Second Derivative test \cite{K2} (see also \cite[Theorem 4.19]{Ko}).  Indeed, since $I\!K$ is a body of revolution, if we consider the coordinates $(x_1,\overline{x})$ in $\mathbb{R}^n$, where $\overline{x}=(x_2,\ldots,x_{n})$ and  $x_1$ is in the direction of the axis of revolution, then the Minkowsky functional of $I\!K$ is given by 
\[
\|(x_1,\overline{x})\|_{I\!K}^{-1}=\frac{1}{\sqrt{x_1^2+\overline{x}^2}} \, \rho_{I\!K} \left(\arccos \left( \frac{x_1}{\sqrt{x_1^2+\overline{x}^2}} \right) \right),  
\]
and the condition of the Second Derivative test, 
\[
\frac{\partial^2 (\|(x_1,\overline{x})\|_{I\!K})}{\partial x_1^2}(0,x_2,\ldots,x_n)=0,
\]
is easily computed to be equivalent to $\rho_{I\!K}(\pi\!/2)-\rho_{I\!K}''(\pi\!/2)=0$. Besides, the convergence of  $\frac{\partial^2 (\|x\|_{I\!K})}{\partial x_1^2}$ to  0 as $x_1$ approaches zero  is uniform in a neighborhood of the equator by Lemma \ref{regularity}. Hence, the Second Derivative test implies that $I\!K$ is not an intersection body, obviously a contradiction. Therefore, $\rho_{I\!K}(\pi\!/2)-\rho_{I\!K}''(\pi\!/2)> 0$, and  $I\!K$ is strictly convex at the equator, with equatorial power type 2.

\end{proof}

In four dimensions, the result of Theorem \ref{Starbodies} is not necessarily true, as the following two examples show. 

\begin{example} 
 \label{cyl}
When the origin is not an interior point of $K$, $I\!K$ need not be strictly convex at the equator. 
Assume  that  $\rho_K(\phi)=0$ for all $\phi \in [0,\al]$, $\al>0$. Then, if $\te \in [\pi\!/2-\al,\pi\!/2]$, 
\[
\rd{I\!K}(\te)\sin\te = \int_{\pi\!/2-\te}^{\pi\!/2} \rd{K}(\phi)^3 \sin\phi \, d\phi=\int_{0}^{\pi\!/2} \rd{K}(\phi)^{3}\sin\phi \, d\phi  =C.
\]
Therefore, $\rd{I\!K}(\te)=C/\sin \te$ for all $\te  \in [\pi\!/2-\al,\pi\!/2]$, which means that $I\!K$ is cylindrical around the equator. Hence, $I\!K$ is locally convex at the equator, but not strictly convex.
\end{example}

\begin{figure}[h!]
\begin{center}
\includegraphics[scale=1]{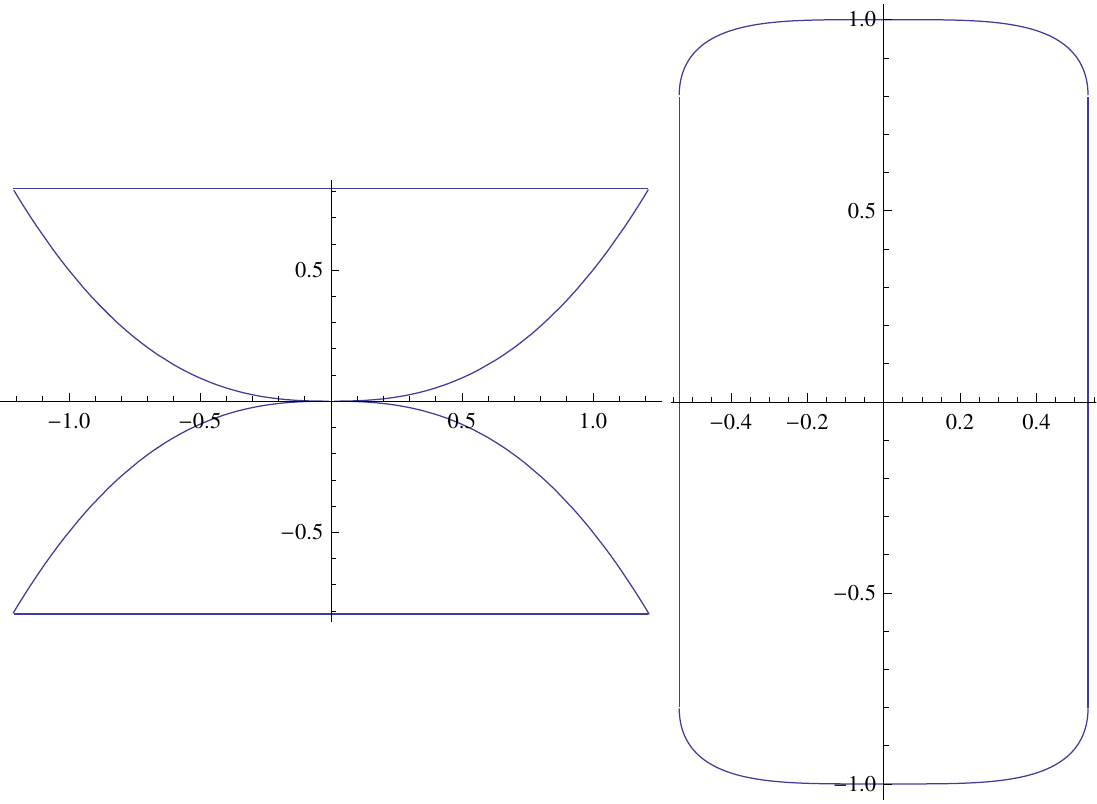}
\end{center}
\caption{\small The bodies $K$ (left) and $I\!K$ (right) in Example \ref{mod4}. }
\label{modconv4}
\end{figure}

\begin{example} 
 \label{mod4}
In this example we present a four-dimensional intersection body of a star body, which is strictly convex but does not have modulus of convexity of power type 2. Figure \ref{modconv4} shows the body of revolution $K$ and its four-dimensional intersection body  $I\!K$. The radial function if $K$ is 
\[
      \rho_{K}(\theta)=\left\{ \begin{array}{cc}
               (4 \sin^2{u}/\cos^5{u})^{1/3},  & \mbox{ if } 0 \leq \te \leq \pi/2 - \arctan(\sqrt[4]{5})\\
               A/\sin{u}, &  \mbox{ if }  \pi/2 - \arctan(\sqrt[4]{5})\leq \te \leq \pi/2.
     \end{array}
 \right.
\]

The radial function of $I\!K$, which we calculated using  {\it Mathematica}, is 
\[
      \rho_{I\!K}(\theta)=\left\{ \begin{array}{cc}
                B/\cos\te, & \mbox{ if } 0 \leq \te \leq \arctan(\sqrt[4]{5})\\
                (2(\sin \te)^2-1)/(\sin\te)^5, & \mbox{ if } \arctan(\sqrt[4]{5}) \leq \te \leq \pi\!/2,
     \end{array}
 \right.
\]
where $A,B$ are constants chosen appropriately so that $\rho_K,\rho_{I\!K}$ are continuous.  If we compute the modulus of convexity at the equator for $I\!K$, we obtain
\[
    \psi_{I\!K}(\eps)=1-\rho_{I\!K}(\te) \sin\te=1- \frac{2(\sin \te)^2-1}{(\sin\te)^4}=(\cot \te)^4=\eps^4,
\]
where the last step comes from the definition of $\psi_{I\!K}$ (\ref{psi}). Hence $I\!K$ is strictly convex at the equator, but has equatorial power type 4.
\end{example}

The bodies in Examples 1 and 2 have the common feature that the origin is not an interior point of $K$. With the additional hypothesis that the origin is an interior point of $K$, the intersection body of $K$ has equatorial power type 2, even in the four dimensional case, as shown in the next Theorem. Note that none of the Theorems \ref{Starbodies} and \ref{int0} implies the other one: Theorem \ref{Starbodies} assumes dimension $n \geq 5$, but allows the origin to be a boundary point of $K$, while the result of Theorem \ref{int0} applies for $n \geq 4$, while needing that the origin is interior to $K$. The proof of  Theorem \ref{int0}  uses similar ideas as those in the proof of Theorem \ref{thm:uniform_convexity_IK}.

\begin{theorem}
 \label{int0}
 Let $K$ be a symmetric star body of revolution in $\mathbb{R}^n$, for $n\geq 4$, such that the origin is an interior point of $K$. Then $I\!K$ has equatorial power type 2.
\end{theorem}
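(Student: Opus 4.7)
The plan is to adapt the decomposition from the proof of Theorem~\ref{thm:uniform_convexity_IK}, using the interior-origin hypothesis in place of convexity of $K$. Since $\rd{K}$ is continuous and strictly positive on $\Sp$, there exist constants $0<m\le M<\infty$ with $m\le \rd{K}\le M$, and hence, directly from \eqref{eq:def.psi_K},
\[
\frac{m}{\sqrt{1+t^2}}\ \le\ \psi_K(t)\ \le\ \frac{M}{\sqrt{1+t^2}},\qquad t\ge 0.
\]

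Using \eqref{eq:formula2} and \eqref{eq:formula1}, write $\psi_{I\!K}(0)-\psi_{I\!K}(x)=A(x)+B(x)$, where
\[
A(x)=\int_{1/x}^{\infty}\psi_K(t)^{n-1}\,dt,\qquad
B(x)=\int_{0}^{1/x}\psi_K(t)^{n-1}\bigl[1-(1-x^2t^2)^{(n-4)/2}\bigr]\,dt.
\]
From the pointwise bounds on $\psi_K$ and the substitution $t=\tan u$, one checks
$A(x)\asymp \int_{0}^{\arctan x}\sin^{n-3}v\,dv\asymp x^{n-2}.$

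For $n=4$, the factor $(n-4)/2$ vanishes, so $B\equiv 0$ and $\psi_{I\!K}(0)-\psi_{I\!K}(x)=A(x)\asymp x^{2}$, giving equatorial power type $2$. For $n\ge 5$, $A(x)=O(x^{n-2})=O(x^{3})$ is absorbed into the error, and the main contribution comes from $B$. The upper bound $B(x)\le C x^{2}$ follows from the Bernoulli-type inequality $1-(1-u)^{(n-4)/2}\le C_n u$ on $[0,1]$ combined with $\psi_K(t)\le M/t$ for $t\ge 1$: after splitting at $t=1$, both pieces reduce to $Cx^{2}\int_1^{1/x}t^{3-n}\,dt=O(x^{2})$ since $n\ge 5$ makes the exponent less than $-1$. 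For the lower bound, restrict to $t\in[0,1]$, where $\psi_K\ge m/\sqrt{2}$ and the Taylor estimate $1-(1-x^2t^2)^{(n-4)/2}\ge\tfrac{n-4}{4}x^2t^2$ holds once $x$ is small; this forces $B(x)\ge c\,x^{2}$ for an explicit $c>0$.

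The most delicate case is $n=4$, where $B$ vanishes identically and the entire $x^{2}$ asymptotic must be produced by the tail $A$. This is precisely where the hypothesis that $0$ is interior to $K$ becomes essential: it forces $\rd{K}\ge m>0$ and hence the lower bound $\psi_K(t)\ge m/\sqrt{1+t^2}$ at infinity, which is exactly what makes $A(x)$ bounded below by a multiple of $x^{2}$ (via $\int_{1/x}^{\infty}(1+t^{2})^{-3/2}dt=1-1/\sqrt{1+x^{2}}=x^{2}/2+O(x^{4})$). The cylindrical example from Example~\ref{cyl} and the body of Example~\ref{mod4} show that relaxing the interior-origin assumption can destroy this lower bound and with it the equatorial power type~$2$.
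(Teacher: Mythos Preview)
Your argument is correct and follows the same overall strategy as the paper: the identical split $\psi_{I\!K}(0)-\psi_{I\!K}(x)=A(x)+B(x)$ into tail and bulk, the observation that $B\equiv 0$ when $n=4$ so the whole $x^2$ comes from the tail, and for $n\ge 5$ the tail is $O(x^{n-2})$ while $B$ is shown to be two-sidedly comparable to $x^2$.

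The one genuine difference is in how the interior-origin hypothesis is exploited. The paper normalizes $\rd{K}(0)=1$, encloses $K$ between dilated cylinders $rB_\infty\subset K\subset RB_\infty$, and then introduces auxiliary bodies $K_\te$ (agreeing with $K$ for angles $\le\te$ and linear beyond) in order to have an explicit formula $\psi_{K_\te}(t)=1/(t+b)$ on the tail; the estimates are done for $K_\te$ with constants independent of $\te$, and a limit in $\te$ recovers $K$. You bypass this device entirely by observing that $\psi_K(t)=\rd{K}(\te)/\sqrt{1+t^2}$, so the positivity of $\rd{K}$ immediately yields the two-sided bound $m/\sqrt{1+t^2}\le\psi_K(t)\le M/\sqrt{1+t^2}$, which is all that is needed to estimate $A$ and $B$. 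This is more direct and avoids the auxiliary family altogether; the paper's route, on the other hand, makes the tail computation slightly more explicit by reducing it to the exact $\psi$ of a double cone. Either way the substance is the same, and your version is the cleaner of the two.
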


\begin{proof}
Since the origin is an interior point, we may assume that $\rd{K}(0)=1$ and $rB_\infty\subset K\subset RB_\infty$ for some constants $r,R>0$ depending on $K$ where $B_\infty=\set{(x,y)\in\R^n=\R\times\R^{n-1}:|x|\le1, |y|\le1}$. For $\te\in[0,\pi\!/2]$, consider the symmetric convex body $K_\te$ defined by
\begin{equation*}
\rd{K_\te}(\varphi)=
\begin{cases}
\rd{K}(\varphi),&\quad 0\le\varphi\le\te \\
\rd{L_\te}(\varphi),&\quad \te\le\varphi\le\pi\!/2
\end{cases}
\end{equation*}
where $L_\te$ is a body of revolution obtained by rotating the line containing two points of angles $0$, $\te$ on the boundary of $K$, i.e.,  $$L_\te=\set{(x,y)\in\R^n=\R\times\R^{n-1}:|x|+b|y|=1}\quad\text{for }b=b(\te)=\frac{1-\rd{K}(\te)\cos\te}{\rd{K}(\te)\sin\te}.$$ Then, by \eqref{eq:psi_for_line} in Lemma \ref{lem:easy_bounds_psi_K}, we have 
\begin{equation}\label{eq:psi.theta1}
\psi_{K_\te}(x)=\psi_{L_\te}(x)=\frac1{x+b},\quad\text{for every }x\ge\cot\te,
\end{equation}
and moreover, from $rB_\infty\subset K\subset RB_\infty$,
\begin{equation}\label{eq:psi.theta2}
\frac{r}{x}\le \psi_{K_\te}(x)\le\frac{R}{x},\quad\text{for every }x\ge1.
\end{equation}
We need to compute $\psi_{I\!K}(0)-\psi_{I\!K}(\de)$ for small $\de$:
\begin{align*}
\psi_{I\!K_\te}(0)-\psi_{I\!K_\te}(\de) &=\int_{1/\de}^\infty \psi_{K_\te}(t)^{n-1}dt + \int_0^{1/\de} \psi_{K_\te}(t)^{n-1}\brS{1-(1-\de^2t^2)^{\frac{n-4}2}}dt \\
&= \text{\quad(I)\qquad\quad+\qquad\quad(II)}. 
\end{align*}
If $\te\le\pi/2$ and $\de<\tan\te$, then \eqref{eq:psi.theta2} gives upper/lower bounds of the firs term: $$\int_{1/\de}^\infty(r/t)^{n-1}dt \le\text{(I)}\le\int_{1/\de}^\infty(R/t)^{n-1}dt.$$ So, the first term is bounded by $(r^{n-1}/(n-2))\de^{n-2}$ and $(R^{n-1}/(n-2))\de^{n-2}$, which are independent of $\te$. If $n=4$, then (I) is asymptotically equivalent to $\de^2$ and the second term (II) is equal to zero. Assume $n\ge5$. Then the second term (II) is divided into two parts as follows.
\begin{align*}
\text{(II)} &= \int_0^{\cot\te} + \int_{\cot\te}^{1/\de} \psi_{K_\te}(t)^{n-1}\brS{1-(1-\de^2t^2)^{\frac{n-4}2}}dt \\
&= \text{(II-1)\,\,\,\,+\,\,\,(II-2)},
\end{align*}
where 
\begin{align*}
\text{(II-1)} &= \int_0^{\cot\te} \psi_{K_\te}(t)^{n-1}\brS{1-(1-\frac{n-4}2\de^2t^2+O(\de^4))}dt \\
&=\brR{\frac{n-4}{2}\int_0^{\cot\te} \psi_{K_\te}(t)^{n-1}t^2dt}\de^2,
\end{align*}
and
\begin{align*}
\text{(II-2)} &\asymp \int_{\cot\te}^{1/\de} \frac{1-(1-\de^2t^2)^{\frac{n-4}2}}{t^{n-1}}dt =\int_{\cot\te}^{1/\de} \frac1{t^{n-1}}dt - \int_{\cot\te}^{1/\de} \frac{(1-\de^2t^2)^{\frac{n-4}2}}{t^{n-4}}\,\frac{dt}{t^3}\\
&=\frac{(\tan\te)^{n-2}-\de^{n-2}}{n-2} - \frac12\int^{\tan^2\te}_{\de^2} (s-\de^2)^{\frac{n-4}2}ds\\
&=\frac{(\tan\te)^{n-2}-\de^{n-2}-(\tan^2\te-\de^2)^{\frac{n-2}2}}{n-2} =\frac{(\tan\te)^{n-4}}{2}\de^2+O(\de^{n-2}).
\end{align*}
(Note that $\psi_{K_\te}(t)$ in the second integral (II-2) is comparable to $1/t$ by \eqref{eq:psi.theta2}).
Furthermore, when $n\ge5$, the integral of (II-1) is bounded above and below by positive constants independent of $\te$:
$$\int_0^{\cot\te} \psi_{K_\te}(t)^{n-1}t^2dt \le \int_0^1R^{n-1}t^2dt + \int_1^\infty(R/t)^{n-1}t^2dt=\frac{n-1}{3(n-4)}R^{n-1}$$ and $$\int_0^{\cot\te} \psi_{K_\te}(t)^{n-1}t^2dt \ge \int_0^1r^{n-1}t^2dt=\frac13 r^{n-1}.$$
Finally, letting $\te$ go to zero, we have equatorial power type 2 for the body $K$.
\end{proof}

\small{
\noindent M.A. Alfonseca: Department of Mathematics, North Dakota State University, Fargo, ND 58018, \newline Email: maria.alfonseca@ndsu.edu
\newline
\noindent J. Kim: Department of Mathematics, Kent State
University, Kent, OH 44242, USA.\newline Email:  jkim@math.kent.edu
}

\end{document}